\newtheorem{thm}{Theorem}[section]
\newtheorem{cor}[thm]{Corollary}
\newtheorem{lem}[thm]{Lemma}
\newtheorem{prop}[thm]{Proposition}
\theoremstyle{definition}
\newtheorem{defn}[thm]{Definition}
\theoremstyle{example}
\theoremstyle{remark}
\newtheorem{rem}[thm]{Remark}
\numberwithin{equation}{section}
\begin{document}

\title[An appropriate representation space for controlled g-frames]
{An appropriate representation space for controlled g-frames}

\author[M. Forughi]{$^a$ Maryam Forughi}
\address{$^a$ Islamic Azad University of Shabester\\Tabriz-Shabestar\\
Iran} \email{Maryam.forughi@yahoo.com}

\author[E. Osgooei]{$^b$ Elnaz Osgooei $^*$}\footnote{Corresponding author $^*$}
\address{$^b$ Faculty of Science\\Urmia University of Technology\\Urmia\\
Iran} \email{e.osgooei@uut.ac.ir}

\author[A. Rahimi]{$^c$ Asghar Rahimi}
\address{$^c$ Department of Mathematics\\ University of Maragheh\\ Iran\\}
\email{rahimi@maragheh.ac.ir}

\author[M. Javahernia]{$^d$ Mojgan Javahernia}
\address{$^d$ Islamic Azad University of Shabester\\Tabriz-Shabestar\\
Iran} \email{javahernia\_math@yahoo.com}
\dedicatory{}

\subjclass[2010]{Primary 42C15; Secondary 46C99, 41A58}

\keywords{Controlled g-frame, controlled g-dual frame, trace class
operator.}

\begin{abstract}
In this paper, motivating the range of operators, we propose an
appropriate representation space to introduce synthesis and analysis
operators of controlled g-frames and discuss the properties of these
operators. Especially, we show that the operator obtained by the
composition of the synthesis and analysis operators of two
controlled g-Bessel sequence is a trace class operator. Also, we
define the canonical controlled g-dual and show that this dual gives
rise to expand coefficients with the minimal norm. Finally, we
extend some known equalities and inequalities for controlled
g-frames.
\end{abstract}
\maketitle
\section{Introduction and Preliminaries}
Frames were first introduced in the context of non-harmonic Fourier
series by Duffin and Schaeffer \cite{Duffin}. During the last $20$s
the theory of frames has been developed rapidly and because of the
abundant use of frames in engineering and applied sciences, many
generalization of frames have come into play.
\\G-frames that include the concept of ordinary frames have been introduced
by Sun \cite{sun} and improved by many authors \cite{Hua, Don, naf,
Rah}. Controlled frames have been improved recently to improve the
numerical efficiency of interactive algorithms that inverts the
frame operator \cite{Bal}. Following that, controlled frames have
been generalized to another kinds of frames \cite{Hua, Kho, Mus,
osg, Rah, Najaf, Don}.
\\In this paper, motivating the concept of g-frames and controlled
frames we define controlled g-frames. In Section 2, imagined the
range of an operator, a new representation space is introduced such
that the synthesis and analysis operators could be defined. In
Section 3, controlled g- dual frames and canonical controlled g-dual
frames are introduced and shown  that canonical g-dual gives rise to
expand coefficients with the minimal norm. Finally, some equalities
and inequalities are presented for controlled g-frames and
especially for their operators in Section 4.
\\Throughout this paper, $H$ is a  separable Hilbert space,
$\{H_i\}_{i\in\Bbb I}$ is the collection of Hilbert spaces,
$\mathcal{B}(H,K)$ is the family of all linear bounded operators
from $H$ into $K$ and $\mathcal{GL}(H)$ is the set of all bounded
linear operators which have bounded inverses.
\\At first, we collect some definitions and basic results that are needed in
the paper.
\begin{lem}(\cite{naj})\label{l0}
Let $u\in\mathcal{B}(H)$ be a self-adjoint operator and
$v:=au^2+bu+c$ where $a,b,c\in\Bbb R$.
\begin{enumerate}
\item[(i)] If $a>0$, then
$$\inf_{\Vert f\Vert=1}\langle vf, f\rangle\geq\frac{4ac-b^2}{4a}.$$
\item[(ii)] If $a<0$, then
$$\sup_{\Vert f\Vert=1}\langle vf, f\rangle\leq\frac{4ac-b^2}{4a}.$$
\end{enumerate}
\end{lem}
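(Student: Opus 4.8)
The plan is to reduce the operator-valued quadratic $v=au^2+bu+c$ to an ordinary scalar quadratic by evaluating the quadratic form on a fixed unit vector $f$ and introducing the single real parameter $x:=\langle uf,f\rangle$. First I would expand
$$\langle vf,f\rangle=a\langle u^2f,f\rangle+b\langle uf,f\rangle+c.$$
Because $u$ is self-adjoint, $\langle uf,f\rangle$ is real, so $x$ is a genuine real number, and moreover $\langle u^2f,f\rangle=\langle uf,uf\rangle=\Vert uf\Vert^2$. The key estimate is the Cauchy--Schwarz inequality applied to $uf$ and $f$: since $\Vert f\Vert=1$,
$$x^2=\langle uf,f\rangle^2\leq\Vert uf\Vert^2\Vert f\Vert^2=\langle u^2f,f\rangle.$$
Thus $\langle u^2f,f\rangle\geq x^2$, which is the one nontrivial inequality in the argument.

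From here the two cases follow by tracking the sign of $a$. For (i), when $a>0$, multiplying $\langle u^2f,f\rangle\geq x^2$ by $a$ preserves the inequality, so that $\langle vf,f\rangle\geq ax^2+bx+c$. The right-hand side is a real upward-opening parabola in $x$, whose global minimum over all real $x$ equals the familiar vertex value $\frac{4ac-b^2}{4a}$ (attained at $x=-b/2a$, as one sees by completing the square $ax^2+bx+c=a\bigl(x+\tfrac{b}{2a}\bigr)^2+\tfrac{4ac-b^2}{4a}$). Hence $\langle vf,f\rangle\geq\frac{4ac-b^2}{4a}$ for every unit $f$, and taking the infimum yields (i). For (ii), when $a<0$, multiplying by $a$ reverses the inequality, so $\langle vf,f\rangle\leq ax^2+bx+c$; now the parabola opens downward and the same completion of the square shows its global maximum is again $\frac{4ac-b^2}{4a}$, which gives (ii) after passing to the supremum.

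I do not anticipate a serious obstacle here: the statement is essentially the operator version of the fact that a quadratic with positive (resp.\ negative) leading coefficient is bounded below (resp.\ above) by its vertex value. The only point needing care is the direction of the inequality when passing from the operator term $\langle u^2f,f\rangle$ to its scalar lower bound $x^2$, since this direction must be compatible with the sign of $a$ for the bound to land on the correct side. Cauchy--Schwarz supplies exactly $\langle u^2f,f\rangle\geq x^2$, which is the helpful direction for $a>0$ and, after multiplication by the negative number $a$ in case (ii), the helpful direction there as well. Self-adjointness of $u$ is used twice---to guarantee $x\in\mathbb{R}$ and to identify $\langle u^2f,f\rangle$ with $\Vert uf\Vert^2$---and both uses are essential.
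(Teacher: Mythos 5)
Your proof is correct. Note that the paper itself gives no proof of this lemma; it is quoted with a citation to Najati's thesis \cite{naj}, so there is no in-paper argument to compare against. Your argument is the natural one and it is complete: self-adjointness makes $x=\langle uf,f\rangle$ real and gives $\langle u^2f,f\rangle=\Vert uf\Vert^2$, Cauchy--Schwarz on the unit vector $f$ gives $\langle u^2f,f\rangle\geq x^2$, and multiplying by $a$ (preserving or reversing the inequality according to the sign of $a$) reduces both cases to the vertex value $\frac{4ac-b^2}{4a}$ of the scalar quadratic $ax^2+bx+c$. Your closing remark correctly identifies the one point of care, namely that the direction supplied by Cauchy--Schwarz is the useful one in each case precisely because of the sign of $a$.
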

\begin{lem}\label{l2}(\cite{cas1})
If $u,v$ are operators on $H$ satisfying $u+v=id_{H}$, then
$$u-v=u^2-v^2.$$
\end{lem}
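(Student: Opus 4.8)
The plan is to eliminate one of the two operators using the hypothesis $u+v=\mathrm{id}_H$ and reduce everything to a single-operator identity, which can then be checked by direct computation. Concretely, I would set $v=\mathrm{id}_H-u$ and substitute this into both sides of the claimed equality, so that the statement becomes an identity purely in $u$.

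First I would expand the right-hand side. Using $v=\mathrm{id}_H-u$,
\[
u^2-v^2=u^2-(\mathrm{id}_H-u)^2=u^2-\bigl(\mathrm{id}_H-2u+u^2\bigr)=2u-\mathrm{id}_H.
\]
Next I would expand the left-hand side in the same way,
\[
u-v=u-(\mathrm{id}_H-u)=2u-\mathrm{id}_H,
\]
and conclude that the two expressions coincide, which proves $u-v=u^2-v^2$.

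An alternative route is to factor $u^2-v^2=(u+v)(u-v)$ and invoke the hypothesis $u+v=\mathrm{id}_H$ directly, giving $(u+v)(u-v)=\mathrm{id}_H\,(u-v)=u-v$. The point requiring care here, and the only genuine subtlety in the argument, is that the factorization $u^2-v^2=(u+v)(u-v)$ is \emph{not} valid for arbitrary operators, since in general $(u+v)(u-v)=u^2-uv+vu-v^2$. One must therefore observe that $u$ and $v$ commute: from $v=\mathrm{id}_H-u$ we get $uv=u-u^2=vu$, so the cross terms cancel and the factorization is justified. For this reason I would favor the direct substitution above, as it sidesteps the commutativity check entirely and makes the computation transparent.
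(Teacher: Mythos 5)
Your proof is correct. The paper itself does not prove this lemma---it is quoted from the reference \cite{cas1} without proof---so there is no internal argument to compare against; your direct substitution $v=\mathrm{id}_H-u$, reducing both sides to $2u-\mathrm{id}_H$, is the standard argument (and is essentially how the cited source proves it). Your side remark about the factorization $(u+v)(u-v)$ requiring commutativity of $u$ and $v$ is a genuine and worthwhile caution for operator identities, and your observation that commutativity does hold here (since $uv=u-u^2=vu$) is accurate.
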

If an operator $ u$ has closed range, then there exists a
right-inverse operator $u^ \dagger$ (pseudo-inverse of $u$) in the
following senses (see \cite{ch}).
\begin{lem}\label{l1}
Let $u\in\mathcal{B}(K,H)$  be a bounded operator with closed range
$\mathcal{R}_{u}$. Then there exists a bounded operator $u^\dagger
\in\mathcal{B}(H,K)$ for which
$$uu^{\dagger} x=x, \ \ x\in \mathcal{R}(u).$$
\end{lem}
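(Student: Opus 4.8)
The plan is to exploit the closed-range hypothesis through the bounded inverse theorem, building $u^\dagger$ as a boundedly invertible bijection on a complement followed by an orthogonal projection. First I would use the closedness of the kernel: since $u$ is bounded, $\mathcal{N}(u):=\ker u$ is a closed subspace of $K$, so we may orthogonally decompose $K=\mathcal{N}(u)\oplus\mathcal{N}(u)^{\perp}$. Restricting $u$ to the complement gives an operator $\tilde{u}:=u|_{\mathcal{N}(u)^{\perp}}\colon\mathcal{N}(u)^{\perp}\to\mathcal{R}(u)$ which is injective (its kernel is trivial by construction) and surjective onto $\mathcal{R}(u)$ (because $u$ annihilates $\mathcal{N}(u)$, so the image of the complement exhausts the whole range). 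Thus $\tilde{u}$ is a bounded bijection.

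The key step is to invert $\tilde{u}$ boundedly. Both $\mathcal{N}(u)^{\perp}$ and $\mathcal{R}(u)$ are closed subspaces of Hilbert spaces, hence themselves complete, and $\tilde{u}$ is a bounded linear bijection between them. The bounded inverse theorem (equivalently, the open mapping theorem) then supplies a bounded inverse $\tilde{u}^{-1}\colon\mathcal{R}(u)\to\mathcal{N}(u)^{\perp}$. This is precisely the point where the closed-range assumption is indispensable: without it $\mathcal{R}(u)$ need not be complete, and the open mapping theorem could not be applied to guarantee boundedness of the inverse.

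Finally I would assemble $u^{\dagger}$ by composing with the orthogonal projection onto the range. Since $\mathcal{R}(u)$ is closed, we have $H=\mathcal{R}(u)\oplus\mathcal{R}(u)^{\perp}$; letting $P\colon H\to\mathcal{R}(u)$ denote the orthogonal projection, I set $u^{\dagger}:=\tilde{u}^{-1}P$, which is bounded as a composition of bounded operators and maps $H$ into $K$. For any $x\in\mathcal{R}(u)$ one has $Px=x$, whence $u^{\dagger}x=\tilde{u}^{-1}x\in\mathcal{N}(u)^{\perp}$, and therefore
$$uu^{\dagger}x=\tilde{u}\,\tilde{u}^{-1}x=x,\qquad x\in\mathcal{R}(u),$$
which is the asserted right-inverse property. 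I do not expect a genuine technical obstacle here; the only care required is the careful verification that $\tilde{u}$ is a bijection onto the closed range and the correct invocation of the open mapping theorem, with the closedness of $\mathcal{R}(u)$ being the hypothesis that makes both the projection step and the bounded inversion go through.
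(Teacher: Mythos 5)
Your proof is correct and complete. The paper itself offers no proof of this lemma---it simply cites Christensen's \emph{An Introduction to Frames and Riesz Bases}---and your argument is precisely the standard construction found in that reference: decompose $K=\mathcal{N}(u)\oplus\mathcal{N}(u)^{\perp}$, observe that the restriction $\tilde{u}=u|_{\mathcal{N}(u)^{\perp}}$ is a bounded bijection onto the (closed, hence complete) range, invert it by the bounded inverse theorem, and set $u^{\dagger}=\tilde{u}^{-1}P$ with $P$ the orthogonal projection onto $\mathcal{R}(u)$. All the steps are verified correctly, including the two places where closedness of the range is genuinely needed (completeness of $\mathcal{R}(u)$ for the open mapping theorem, and existence of the orthogonal projection onto $\mathcal{R}(u)$), so your write-up in fact supplies the details the paper leaves to the citation.
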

\begin{defn}(\textbf{g-frame})
A family $\Lambda:=\lbrace
\Lambda_i\in\mathcal{B}(H,H_i)\rbrace_{i\in\Bbb I}$ is called a
g-frame for $H$ with respect to $\lbrace H_i\rbrace_{i\in\Bbb I}$,
if there exist $0<A\leq B<\infty$ such that
\begin{equation} \label{first}
A\Vert f\Vert^2\leq\sum_{i\in\Bbb I}\Vert \Lambda_{i}f\Vert^2\leq
B\Vert f\Vert^2, \ \  f\in H.
\end{equation}
If only the second inequality in (\ref{first}) satisfy, then we say
that $\{\Lambda_{i}\}_{i\in\Bbb I}$ is a g-Bessel sequence with
upper bound $B$.
\end{defn}
If $\Lambda$ is a g-Bessel sequence, then the synthesis and analysis
operators are defined by
\begin{align*}
T_{\Lambda}: &(\sum_{i\in\Bbb I}\oplus H_i)_{\ell^{2}} \rightarrow H , \qquad T_{\Lambda}^{*}: H \rightarrow (\sum_{i\in\Bbb I}\oplus H_i)_{\ell^{2}},\\
T_{\Lambda}(\lbrace f_i&\rbrace_{i\in\Bbb I})=\sum_{i\in\Bbb
I}\Lambda_{i}^{\ast}(f_i) , \qquad T_{\Lambda}^{\ast}(f)=\lbrace
\Lambda_{i}f\rbrace_{i\in\Bbb I},
\end{align*}
where
\begin{eqnarray*}
(\sum_{i\in\Bbb I}\oplus H_i)_{\ell^{2}}=\big\lbrace\lbrace
f_i\rbrace_{i\in\Bbb I} \ \vert \ f_i\in H_i \ , \ \sum_{i\in\Bbb
I}\Vert f_i\Vert^2<\infty \big\rbrace,
\end{eqnarray*}
and, the g-frame operator is given by
$$S_{\Lambda}f=T_{\Lambda}T^*_{\Lambda}f=\sum_{i\in\Bbb I}\Lambda^{\ast}_{i}\Lambda_{i}f, \ \ f\in H,$$
which is positive, self-adjoint and invertible (see \cite{sun}).
\section{Controlled g-frames and their operators}
Controlled frames for spherical wavelets were introduced  in
\cite{bog} to get a numerically more efficient approximation
algorithm. In this section by extending the concept of controlled
frames and g-frames, we define the concept of controlled g-frames
and construct an appropriate representation space to organize the
synthesis and analysis operators.
\begin{defn}\cite{Bal}
Let $C\in\mathcal{GL}(H)$. We say that $F:=\{ f_i\}_{i\in\Bbb I}$ is
a $C$-controlled frame for $H$ if there exist $0<A_{C}\leq
B_{C}<\infty$ such that for each $f\in H$
\begin{equation}\label{cont}
A_{C}\Vert f\Vert^2\leq\sum_{i\in\Bbb I}\langle f, f_i\rangle\langle
Cf_i, f\rangle\leq B_{C}\Vert f\Vert^2.
\end{equation}
\end{defn}
\begin{defn}
Let $C,C'\in\mathcal{GL}(H)$. We say that
$\Lambda:=\{\Lambda_i\in\mathcal{B}(H, H_i)\}_{i\in\Bbb I}$  is a
$(C,C')$-controlled g-frame for $H$ if there exist $0<A_{CC'}\leq
B_{CC'}<\infty$ such that for each $f\in H$
\begin{equation}\label{cont}
A_{CC'}\Vert f\Vert^2\leq\sum_{i\in\Bbb I}\langle \Lambda_i C'f,
\Lambda_i Cf\rangle\leq B_{CC'}\Vert f\Vert^2.
\end{equation}
\end{defn}
For simplicity, we use a notation $CC'$ instead of $(C,C')$. We call
$\Lambda$ a \textit{Parseval $CC'$-controlled g-frame} if
$A_{CC'}=B_{CC'}=1$. When the right hand inequality of \eqref{cont}
holds, then $\Lambda$ is called a \textit{$CC'$-controlled g-Bessel
sequence} for $H$ with bound $B_{C}$. \\If $\Lambda$ is a
$CC'$-controlled g-frame for $H$ and $C^*\Lambda^*_i\Lambda_iC'$ is
positive for each $i\in\Bbb I$, then we have
$$A_{CC'}\Vert f\Vert^2\leq\sum_{i\in\Bbb I}\Vert(C^*\Lambda^*_i\Lambda_iC')^
{\frac{1}{2}}f\Vert^2\leq B_{CC'}\Vert f\Vert^2,\ \ f\in H.$$
Consider a proper representation space by
$$K:=\big\lbrace\{(C^*\Lambda^*_i\Lambda_iC')^
{\frac{1}{2}}f\}_{i\in\Bbb I} : \ f\in H\big\rbrace\subset
(\sum_{i\in\Bbb I}\oplus H_i)_{\ell^{2}}.$$ Since $K$ is a closed
subspace of $(\sum_{i\in\Bbb I}\oplus H_i)_{\ell^{2}}$, we can
define the synthesis and analysis operators of $CC'$-controlled
g-frames by
\begin{align*}
T_{CC'}:K\longrightarrow H,\\
T_{CC'}(\lbrace
(C^*\Lambda^*_i\Lambda_iC')^{\frac{1}{2}}f\rbrace_{i\in\Bbb
I})&=\sum_{i\in\Bbb I}C^*\Lambda^*_i\Lambda_i C'f
\end{align*}
and
\begin{align*}
T_{CC'}^*&:H\longrightarrow K,\\
T_{CC'}^*(f)=&\lbrace (C^*\Lambda^*_i\Lambda_iC')^{\frac{1}{2}}f
\rbrace_{i\in\Bbb I}.
\end{align*}
Thus, the $CC'$-controlled g-frame operator is given by
$$S_{CC'}f=T_{CC'}T^*_{CC'}f=\sum_{i\in\Bbb I}C^*\Lambda^*_i\Lambda_i C'f, \qquad f\in H.$$
So,
$$\langle S_{CC'}f, f\rangle=\sum_{i\in\Bbb I}\langle \Lambda_i C'f, \Lambda_i Cf\rangle, \qquad f\in H,$$
and
$$A_{CC'}Id_H\leq S_{CC'}\leq B_{CC'}Id_H.$$
Therefore, $S_{CC'}$ is a positive, self-adjoint and invertible
operator (see \cite{Rah} and \cite{Mus}). Thus, since
$f=S_{CC'}S^{-1}_{CC'}f=S^{-1}_{CC'}S_{CC'}f$, we have
\begin{equation}\label{s1}
f=\sum_{i\in\Bbb I}C^*\Lambda^*_i\Lambda_i
C'S^{-1}_{CC'}f=\sum_{i\in\Bbb I}S^{-1}_{CC'}C^*\Lambda^*_i\Lambda_i
C'f,\ \ \ f\in H.
\end{equation}
\begin{rem}\label{exam}
We introduce a Parseval $CC'$-controlled g-frame for $H$ by the
$CC'$-controlled g-frame operator. Suppose that $\Lambda$ is a
$CC'$-controlled g-frame for $H$. Since $S_{CC'}$(or $S_{CC'}^{-1}$)
is positive in $\mathcal{B}(H)$ and $\mathcal{B}(H)$ is a
$C^*$-algebra, then there exists a unique positive square root
$S^{\frac{1}{2}}_{CC'}$ (or $S^{-\frac{1}{2}}_{CC'}$) which commutes
with $S_{CC'}$ and $S_{CC'}^{-1}$. Therefore, for any $f\in H$ we
can write
$$f=S^{-\frac{1}{2}}_{CC'}S_{CC'}S^{-\frac{1}{2}}_{CC'}f=\sum_{i\in\Bbb I}S^{-\frac{1}{2}}_{CC'}C^*\Lambda^*_i\Lambda_iC'S^{-\frac{1}{2}}_{CC'}f.$$
Now, assume that $S^{-\frac{1}{2}}_{CC'}$ commutes with $C, C'$.
Then we get
$$\Vert f\Vert^2=\langle f, f\rangle=\sum_{i\in\Bbb I}\langle\Lambda_iS^{-\frac{1}{2}}_{CC'}C'f,
\Lambda_iS^{-\frac{1}{2}}_{CC'}Cf\rangle.$$ Hence,
$\{\Lambda_iS^{-\frac{1}{2}}_{CC'}\}_{i\in\Bbb I}$ is a Parseval
$CC'$-controlled g-frame for $H$.
\end{rem}
\begin{thm}\label{th1}
A sequence $\Lambda$ is a $CC'$-controlled g-Bessel sequence for $H$
with bound $B_{CC'}$ if and only if the operator
\begin{align*}
T_{CC'}:K\longrightarrow H,\\
T_{CC'}(\lbrace
(C^*\Lambda^*_i\Lambda_iC')^{\frac{1}{2}}f\rbrace_{i\in\Bbb
I})&=\sum_{i\in\Bbb I}C^*\Lambda^*_i\Lambda_i C'f
\end{align*}
is a well-defined and bounded operator with $\Vert
T_{CC'}\Vert\leq\sqrt{B_{CC'}}$.
\end{thm}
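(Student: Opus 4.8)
The plan is to work throughout with the self-adjoint operators $D_i := (C^*\Lambda^*_i\Lambda_iC')^{\frac{1}{2}}$, which are self-adjoint precisely because each $C^*\Lambda^*_i\Lambda_iC'$ is assumed positive. The single identity I will lean on is $D_i^2 = C^*\Lambda^*_i\Lambda_iC'$ together with $D_i^*=D_i$, so that for $f\in H$ the defining formula can be rewritten as $T_{CC'}(\{D_if\}_{i\in\Bbb I})=\sum_{i\in\Bbb I}D_i(D_if)$. This rewriting is what makes both the well-definedness and the duality estimate transparent.

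For the forward implication, suppose $\Lambda$ is a $CC'$-controlled g-Bessel sequence with bound $B_{CC'}$. I would first settle well-definedness: an arbitrary element of $K$ has the form $\{g_i\}_{i\in\Bbb I}$ with $g_i=D_if$ for some $f$, and writing the output as $\sum_{i\in\Bbb I}D_ig_i$ exhibits it as a function of the sequence $\{g_i\}$ alone, independent of the representative $f$. To get convergence and the norm bound at once, I would fix a finite $J\subset\Bbb I$ and estimate by duality: for $\Vert h\Vert=1$,
$$\Big|\Big\langle\sum_{i\in J}D_ig_i,h\Big\rangle\Big|=\Big|\sum_{i\in J}\langle g_i,D_ih\rangle\Big|\leq\Big(\sum_{i\in J}\Vert g_i\Vert^2\Big)^{\frac{1}{2}}\Big(\sum_{i\in J}\Vert D_ih\Vert^2\Big)^{\frac{1}{2}},$$
using $D_i^*=D_i$ and Cauchy--Schwarz. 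The Bessel hypothesis bounds the second factor by $\sqrt{B_{CC'}}$, so that $\Vert\sum_{i\in J}D_ig_i\Vert\leq\sqrt{B_{CC'}}\,(\sum_{i\in J}\Vert g_i\Vert^2)^{\frac{1}{2}}$. Since $\{g_i\}\in\ell^2$, the partial sums are Cauchy and the series converges in $H$, and letting $J\uparrow\Bbb I$ gives $\Vert T_{CC'}(\{g_i\})\Vert\leq\sqrt{B_{CC'}}\,\Vert\{g_i\}\Vert$, i.e. $\Vert T_{CC'}\Vert\leq\sqrt{B_{CC'}}$.

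For the converse, assume $T_{CC'}$ is a well-defined bounded operator with $\Vert T_{CC'}\Vert\leq\sqrt{B_{CC'}}$. I would identify its Hilbert-space adjoint with the analysis operator: for $\{g_i\}\in K$ and $h\in H$,
$$\langle T_{CC'}(\{g_i\}),h\rangle=\sum_{i\in\Bbb I}\langle g_i,D_ih\rangle=\langle\{g_i\},\{D_ih\}\rangle_K,$$
so $T_{CC'}^*h=\{D_ih\}_{i\in\Bbb I}=\{(C^*\Lambda^*_i\Lambda_iC')^{\frac{1}{2}}h\}_{i\in\Bbb I}$. Then, using $\Vert T_{CC'}^*\Vert=\Vert T_{CC'}\Vert$ and the self-adjointness of $D_i$,
$$\sum_{i\in\Bbb I}\langle\Lambda_iC'f,\Lambda_iCf\rangle=\sum_{i\in\Bbb I}\Vert D_if\Vert^2=\Vert T_{CC'}^*f\Vert_K^2\leq\Vert T_{CC'}\Vert^2\Vert f\Vert^2\leq B_{CC'}\Vert f\Vert^2,$$
which is exactly the $CC'$-controlled g-Bessel inequality with bound $B_{CC'}$.

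The main obstacle I anticipate is the well-definedness in the forward direction: because $K$ is parametrized by $f\in H$ through a map that need not be injective, one must check that the value $\sum_i D_i^2 f$ is unchanged when $f$ is replaced by any $f'$ with $\{D_if'\}=\{D_if\}$, and the rewriting $\sum_i D_i g_i$ with $g_i=D_if$ is precisely what resolves this. A secondary point worth stating explicitly is that in the converse direction the hypothesis that $T_{CC'}$ is an operator on $K$ already presupposes $\sum_i\Vert D_if\Vert^2<\infty$ (so that $K$ lies inside the $\ell^2$-sum) and that $K$ is closed, so that $T_{CC'}^*$ genuinely maps into $K$ and the adjoint computation above is legitimate.
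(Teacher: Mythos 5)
Your proof is correct, but it does more than the paper's and the overlap proceeds by a slightly different mechanism. Write $D_i:=(C^*\Lambda^*_i\Lambda_iC')^{\frac{1}{2}}$. The paper dismisses the forward implication (Bessel $\Rightarrow$ bounded) with the sentence ``we only need to prove the sufficient condition'' and establishes only the converse; you prove both, and your forward direction supplies exactly the details the paper leaves unsaid: well-definedness via the term-by-term rewriting $\sum_i D_i g_i$ (each summand $C^*\Lambda_i^*\Lambda_iC'f=D_i(D_if)$ depends on $g_i=D_if$ alone), convergence via Cauchy partial sums, and the norm bound via the finite-subset Cauchy--Schwarz estimate. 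For the converse --- the only direction the paper proves --- both arguments rest on the same key identity $\Vert\{D_if\}_{i}\Vert^2=\sum_{i}\langle\Lambda_iC'f,\Lambda_iCf\rangle$, but the mechanics differ: the paper bounds $\sum_{i}\langle\Lambda_iC'f,\Lambda_iCf\rangle=\langle T_{CC'}(\{D_if\}_{i}),f\rangle\leq\Vert T_{CC'}\Vert\,\Vert\{D_if\}_{i}\Vert\,\Vert f\Vert$ and then cancels a factor of $\Vert\{D_if\}_{i}\Vert$, whereas you identify the adjoint $T^*_{CC'}f=\{D_if\}_{i}$ explicitly and invoke $\Vert T^*_{CC'}\Vert=\Vert T_{CC'}\Vert$. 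Your route is marginally cleaner (it avoids dividing by a quantity that could a priori vanish, though that case is trivially fine), at the price of needing the adjoint to exist, i.e., treating $K$ or its closure as a Hilbert space --- a presupposition you rightly make explicit, and which the paper's direct estimate sidesteps entirely. Both versions yield the same bound $B_{CC'}$.
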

\begin{proof}
We only need to prove the sufficient condition. Let $T_{CC'}$ be a
well-defined and bounded operator with $\Vert
T_{CC'}\Vert\leq\sqrt{B_{CC'}}$. For each $f\in H$ we have
\begin{align*}
\sum_{i\in\Bbb I}\langle \Lambda_i C'f, \Lambda_i Cf\rangle&=\sum_{i\in\Bbb I}\langle C^*\Lambda^*_i\Lambda_i C'f, f\rangle\\
&=\big\langle T_{CC'}(\lbrace (C^*\Lambda^*_i\Lambda_iC')^{\frac{1}{2}}f\rbrace_{i\in\Bbb I}), f\big\rangle\\
&\leq\Vert T_{CC'}\Vert
\Vert\lbrace(C^*\Lambda^*_i\Lambda_iC')^{\frac{1}{2}}f\rbrace_{i\in\Bbb
I}\Vert\Vert f\Vert.
\end{align*}
But
$$\Vert\lbrace(C^*\Lambda^*_i\Lambda_iC')^{\frac{1}{2}}f\rbrace_{i\in\Bbb I}\Vert^2=\sum_{i\in\Bbb I}\langle \Lambda_i C'f, \Lambda_i Cf\rangle.$$
It follows that
$$\sum_{i\in\Bbb I}\langle \Lambda_i C'f, \Lambda_i Cf\rangle\leq B_{CC'}\Vert f\Vert^2,$$
and this means that $\Lambda$ is a $CC'$-controlled g-Bessel
sequence.
\end{proof}
\begin{thm}
A sequence $\Lambda$  is a $CC'$-controlled g-frame for $H$ if and
only if
$$T_{CC'}:(\lbrace (C^*\Lambda^*_i\Lambda_iC')^{\frac{1}{2}}f\rbrace_{i\in\Bbb I})\longmapsto\sum_{i\in\Bbb I}C^*\Lambda^*_i\Lambda_i C'f$$
is a well-defined and surjective operator.
\end{thm}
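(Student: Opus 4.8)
The plan is to prove the two implications separately, leaning on Theorem \ref{th1} for the Bessel (upper-bound) half and on the invertibility of the frame operator $S_{CC'}$, respectively on the pseudo-inverse supplied by Lemma \ref{l1}, for the surjectivity/lower-bound half. Throughout I keep the standing positivity hypothesis on $C^*\Lambda^*_i\Lambda_iC'$, which is what legitimizes the square-root entries of $K$ and the identity $\Vert T^*_{CC'}f\Vert^2=\sum_{i\in\Bbb I}\langle\Lambda_iC'f,\Lambda_iCf\rangle$.

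For the necessity direction, suppose $\Lambda$ is a $CC'$-controlled g-frame. Then in particular it is a $CC'$-controlled g-Bessel sequence, so Theorem \ref{th1} immediately gives that $T_{CC'}$ is well-defined and bounded. To obtain surjectivity I would invoke the fact, recorded before Remark \ref{exam}, that $S_{CC'}=T_{CC'}T^*_{CC'}$ satisfies $A_{CC'}Id_H\leq S_{CC'}\leq B_{CC'}Id_H$ and is therefore invertible. Then for every $g\in H$ one writes $g=S_{CC'}S^{-1}_{CC'}g=T_{CC'}\big(T^*_{CC'}S^{-1}_{CC'}g\big)$, which exhibits $g$ as an element of the range of $T_{CC'}$; hence $T_{CC'}$ is onto.

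For the sufficiency direction, assume $T_{CC'}$ is well-defined and surjective. First I would argue that well-definedness forces $T_{CC'}$ to be bounded: the partial-sum operators are bounded and converge pointwise on the closed subspace $K$, so Banach--Steinhaus applies. Once boundedness is in hand (with $\Vert T_{CC'}\Vert\leq\sqrt{B_{CC'}}$), Theorem \ref{th1} returns the upper controlled-frame inequality. For the lower bound, surjectivity of the bounded operator $T_{CC'}:K\to H$ lets me apply Lemma \ref{l1} to produce a bounded pseudo-inverse $T^\dagger_{CC'}$ with $T_{CC'}T^\dagger_{CC'}=Id_H$. Taking adjoints gives $(T^\dagger_{CC'})^*T^*_{CC'}=Id_H$, whence
$$\Vert f\Vert=\Vert(T^\dagger_{CC'})^*T^*_{CC'}f\Vert\leq\Vert T^\dagger_{CC'}\Vert\,\Vert T^*_{CC'}f\Vert,\qquad f\in H.$$
Since $\Vert T^*_{CC'}f\Vert^2=\sum_{i\in\Bbb I}\langle\Lambda_iC'f,\Lambda_iCf\rangle$, squaring yields the lower bound with $A_{CC'}=\Vert T^\dagger_{CC'}\Vert^{-2}$, and together with the upper bound this gives the frame inequalities.

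The routine parts are the Bessel characterization (already carried out in Theorem \ref{th1}) and the adjoint manipulation. The step I expect to be the main obstacle is the bounded-below argument in the sufficiency direction: one must be careful that the domain is the specific representation space $K$ rather than the whole direct sum $(\sum_{i\in\Bbb I}\oplus H_i)_{\ell^2}$, and one must genuinely upgrade the mere well-definedness of $T_{CC'}$ to boundedness before any open-mapping or pseudo-inverse reasoning is available. Only after that upgrade does Lemma \ref{l1} deliver the quantitative lower estimate that pins down $A_{CC'}$.
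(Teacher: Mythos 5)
Your proposal is correct and follows essentially the same route as the paper: necessity via surjectivity of $S_{CC'}=T_{CC'}T^*_{CC'}$, and sufficiency via Theorem \ref{th1} combined with the pseudo-inverse of Lemma \ref{l1}, producing the same lower frame bound $\Vert T^{\dagger}_{CC'}\Vert^{-2}$. The only differences are minor refinements on your side: you explicitly upgrade well-definedness to boundedness via Banach--Steinhaus (a step the paper silently glosses when it invokes Theorem \ref{th1}), and you derive the lower bound from the adjoint identity $(T^{\dagger}_{CC'})^*T^*_{CC'}=Id_H$ rather than from the paper's equivalent Cauchy--Schwarz estimate of $\vert\langle T^{\dagger}_{CC'}f, T^*_{CC'}f\rangle\vert^2$.
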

\begin{proof}
First, suppose that $\Lambda$ is a $CC'$-controlled g-frame for $H$.
Since, $S_{CC'}$ is a surjective operator, so $T_{CC'}$. For the
opposite implication, by Theorem \ref{th1}, $T_{CC'}$ is a
well-defined and bounded operator. So, $\Lambda$ is a
$CC'$-controlled g-Bessel sequence. Now, for each $f\in H$, we have
$f=T_{CC'}T^{\dagger}_{CC'}f$. Hence,
\begin{align*}
\Vert f\Vert^4&=\vert\langle f, f\rangle\vert^2\\
&=\vert\langle T_{CC'}T^{\dagger}_{CC'}f, f\rangle\vert^2\\
&=\vert\langle T^{\dagger}_{CC'}f, T^*_{CC'}f\rangle\vert^2\\
&\leq\Vert T^{\dagger}_{CC'}\Vert^2\Vert f\Vert^2\sum_{i\in\Bbb
I}\langle \Lambda_i C'f, \Lambda_i Cf\rangle.
\end{align*}
We conclude that
$$\frac{1}{\Vert T^{\dagger}_{CC'}\Vert^2}\Vert f\Vert^2\leq\sum_{i\in\Bbb I}\langle \Lambda_i C'f, \Lambda_i Cf\rangle, \quad f\in H.$$
\end{proof}
\begin{thm}
Let $\Lambda=\{\Lambda_i\in\mathcal{B}(H, H_i)\}_{i\in\Bbb I}$ and
$\Theta:=\{\Theta_i\in\mathcal{B}(H, H_i)\}_{i\in\Bbb I}$ be two
$CC'$-controlled g-Bessel sequence for $H$ with bounds $B_1$ and
$B_2$, respectively. If $T_{\Lambda}$ and $T_{\Theta}$ are their
synthesis operators such that $T_{\Lambda}T^*_{\Theta}=Id_H$, then
both $\Lambda$ and $\Theta$ are $CC'$-controlled g-frames for $H$.
\end{thm}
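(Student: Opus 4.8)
The plan is to establish, for each of $\Lambda$ and $\Theta$, a lower controlled g-frame bound, since the upper bounds $B_1$ and $B_2$ are already granted by the hypothesis that both sequences are $CC'$-controlled g-Bessel. The whole argument is powered by the single operator identity $T_{\Lambda}T^*_{\Theta}=Id_H$, which I will combine with a Cauchy--Schwarz estimate; the key (and slightly counterintuitive) point is that the lower bound for $\Lambda$ will be manufactured out of the Bessel bound of the \emph{other} sequence $\Theta$.

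First I would record the two quadratic-form identities produced by the analysis operators: for every $f\in H$,
\[
\|T^*_{\Lambda}f\|^2=\sum_{i\in\Bbb I}\langle\Lambda_i C'f,\Lambda_i Cf\rangle,\qquad
\|T^*_{\Theta}f\|^2=\sum_{i\in\Bbb I}\langle\Theta_i C'f,\Theta_i Cf\rangle,
\]
which follow at once from $T^*_{\Lambda}f=\{(C^*\Lambda^*_i\Lambda_iC')^{1/2}f\}_{i\in\Bbb I}$ and the analogous formula for $\Theta$. In this language the Bessel hypotheses read $\|T^*_{\Lambda}f\|^2\le B_1\|f\|^2$ and $\|T^*_{\Theta}f\|^2\le B_2\|f\|^2$. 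For the lower bound of $\Lambda$, I would begin with $\|f\|^2=\langle f,f\rangle=\langle T_{\Lambda}T^*_{\Theta}f,f\rangle$, move one operator across the inner product to obtain $\langle T^*_{\Theta}f,\,T^*_{\Lambda}f\rangle$, and apply Cauchy--Schwarz together with the Bessel estimate for $\Theta$:
\[
\|f\|^2\le\|T^*_{\Theta}f\|\,\|T^*_{\Lambda}f\|\le\sqrt{B_2}\,\|f\|\,\|T^*_{\Lambda}f\|.
\]
Dividing through and squaring yields $\tfrac{1}{B_2}\|f\|^2\le\|T^*_{\Lambda}f\|^2=\sum_{i\in\Bbb I}\langle\Lambda_i C'f,\Lambda_i Cf\rangle$, so $\Lambda$ is a $CC'$-controlled g-frame with bounds $1/B_2$ and $B_1$.

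For $\Theta$ I would simply take adjoints in $T_{\Lambda}T^*_{\Theta}=Id_H$ to get $T_{\Theta}T^*_{\Lambda}=Id_H$, and then rerun the previous paragraph with the roles of $\Lambda$ and $\Theta$ interchanged, giving $\tfrac{1}{B_1}\|f\|^2\le\sum_{i\in\Bbb I}\langle\Theta_i C'f,\Theta_i Cf\rangle$ and hence bounds $1/B_1$ and $B_2$ for $\Theta$. The one step deserving care is the transfer $\langle T_{\Lambda}T^*_{\Theta}f,f\rangle=\langle T^*_{\Theta}f,\,T^*_{\Lambda}f\rangle$: because $T_{\Lambda}$ and $T_{\Theta}$ act on the distinct representation spaces attached to $\Lambda$ and $\Theta$, I must make sure the composition $T_{\Lambda}T^*_{\Theta}$ is read so that $T^*_{\Theta}f$ lands where $T_{\Lambda}$ genuinely acts as the adjoint of $T^*_{\Lambda}$. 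Granting the well-definedness already presupposed in the statement, this transfer is just the defining adjoint relation, and the only substantive content is the identification of $\|T^*_{\bullet}f\|^2$ with the corresponding controlled sum; the Cauchy--Schwarz cross-coupling then closes the argument.
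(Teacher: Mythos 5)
Your proof is correct and follows essentially the same route as the paper's: both derive the lower bound for $\Lambda$ from the identity $\langle f,f\rangle=\langle T^*_{\Theta}f,T^*_{\Lambda}f\rangle$ (a consequence of $T_{\Lambda}T^*_{\Theta}=Id_H$), Cauchy--Schwarz, and the Bessel bound $B_2$ of the \emph{other} sequence, then swap roles for $\Theta$. The only differences are cosmetic (the paper works with $\Vert f\Vert^4$ rather than dividing by $\Vert f\Vert$, and says ``similarly'' where you explicitly take adjoints), plus your commendable flagging of the representation-space subtlety, which the paper passes over in silence.
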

\begin{proof}
For each $f\in H$, we have
\begin{align*}
\Vert f\Vert^4&=\langle f, f\rangle^2\\
&=\langle T^*_{\Lambda}f, T^*_{\Theta}f\rangle^2\\
&\leq\Vert T^*_{\Lambda}f\Vert^2\Vert T^*_{\Theta}f\Vert^2\\
&=\big(\sum_{i\in\Bbb I}\langle \Lambda_i C'f, \Lambda_i Cf\rangle\big)\big(\sum_{i\in\Bbb I}\langle \Theta_i C'f, \Theta_i Cf\rangle\big)\\
&\leq\big(\sum_{i\in\Bbb I}\langle \Lambda_i C'f, \Lambda_i
Cf\rangle\big) B_2\Vert f\Vert^2.
\end{align*}
Hence,
$$B_2^{-1}\Vert f\Vert^2\leq\big(\sum_{i\in\Bbb I}\langle \Lambda_i C'f, \Lambda_i Cf\rangle\big),$$
and $\Lambda$ is a $CC'$-controlled g-frame. Similarly, $\Theta$ is
a $CC'$-controlled g-frame with lower bound $B^{-1}_1$.
\end{proof}
\begin{thm}
Let $\Lambda=\{\Lambda_i\in\mathcal{B}(H, H_i)\}_{i\in\Bbb I}$ and
$\Theta:=\{\Theta_i\in\mathcal{B}(H, H_i)\}_{i\in\Bbb I}$ be two
$CC'$-controlled g-Bessel sequence for $H$ with bounds $B_1$ and
$B_2$, respectively where $\vert\Bbb I\vert<\infty$. If
$\Phi:=T_{\Lambda}T^*_{\Theta}$, then $\Phi$ is a trace class
operator.
\end{thm}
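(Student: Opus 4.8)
The plan is to compute $\Phi$ explicitly and recognize it as a finite sum of finite-rank operators. Write $U_i := (C^*\Lambda_i^*\Lambda_i C')^{1/2}$ and $V_i := (C^*\Theta_i^*\Theta_i C')^{1/2}$, so that the analysis operator of $\Theta$ is $T_\Theta^* f = \{V_i f\}_{i\in\Bbb I}$, while the synthesis operator of $\Lambda$ acts by $T_\Lambda(\{g_i\}_{i\in\Bbb I}) = \sum_{i\in\Bbb I} U_i g_i$ (which reduces to its stated value $\sum_i C^*\Lambda_i^*\Lambda_i C' f$ on the diagonal elements $\{U_i f\}$ of $K$). Composing, for every $f \in H$,
\begin{equation*}
\Phi f = T_\Lambda T_\Theta^* f = \sum_{i\in\Bbb I} U_i V_i f.
\end{equation*}
Since $\abs{\Bbb I} < \infty$, this is a finite sum of bounded operators on $H$, and the whole argument reduces to a single index $i$.

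Next I would invoke two standard facts about the trace-class operators on $H$: they form a two-sided ideal in $\mathcal{B}(H)$, and any finite sum of trace-class operators is again trace class. By these, it suffices to prove that each single summand $U_i V_i$ is trace class. In fact I would prove the stronger statement that $U_i V_i$ has finite rank, which makes the ideal property unnecessary (the product of a bounded operator with a finite-rank operator is again finite rank) and yields trace-classness directly.

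The heart of the matter is therefore the finite rank of the building block $V_i$ (and likewise $U_i$). Here I would use that $\Theta_i \in \mathcal{B}(H, H_i)$ factors the operator through $H_i$: since $C, C' \in \mathcal{GL}(H)$ are invertible, $\operatorname{Range}(C^*\Theta_i^*\Theta_i C') = \operatorname{Range}(C^*\Theta_i^*\Theta_i) \subseteq C^*\operatorname{Range}(\Theta_i^*)$, whose dimension is at most $\dim H_i$. For a positive operator the closure of the range of its square root equals the closure of its range, which here is at most $\dim H_i$-dimensional; hence $V_i$ has rank at most $\dim H_i$. Consequently $U_i V_i$ has rank at most $\dim H_i$, and summing over the finite set $\Bbb I$ gives
\begin{equation*}
\operatorname{rank}(\Phi) \le \sum_{i\in\Bbb I} \dim H_i,
\end{equation*}
so $\Phi$ is of finite rank, hence trace class.

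I expect this last step to be the genuine obstacle. Finiteness of $\Bbb I$ by itself does not force $\Phi$ to be trace class: in the finite-index setting the Bessel bounds $B_1, B_2$ impose no constraint on the individual operators, so the conclusion really rests on the component spaces $H_i$ being finite dimensional (equivalently, on each $\Lambda_i, \Theta_i$ having finite rank). Once that is granted the verification is routine; an alternative packaging of the same input is to check that $T_\Theta^*$ and $T_\Lambda$ are Hilbert--Schmidt, since $\norm{T_\Theta^*}_{\mathrm{HS}}^2 = \sum_{i\in\Bbb I}\operatorname{tr}(C^*\Theta_i^*\Theta_i C')$, and the product of two Hilbert--Schmidt operators is trace class.
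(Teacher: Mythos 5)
Your proposal takes a genuinely different route from the paper, and the suspicion you voice at the end is exactly on target. The paper never computes $\Phi$ explicitly (though its manipulations are consistent with your identification $\Phi=\sum_{j}U_jV_j$): it takes the polar decomposition $\Phi=u\vert\Phi\vert$ with $u$ a partial isometry, writes $\mathrm{tr}(\vert\Phi\vert)=\sum_{i\in\Bbb I}\langle\vert\Phi\vert e_i,e_i\rangle=\sum_{i\in\Bbb I}\langle T^*_{\Theta}e_i,T^*_{\Lambda}ue_i\rangle$ for an orthonormal basis $\{e_i\}_{i\in\Bbb I}$ of $H$, and then applies Cauchy--Schwarz twice together with the two Bessel bounds to conclude $\mathrm{tr}(\vert\Phi\vert)\leq\sum_{i\in\Bbb I}\sqrt{B_1B_2}\Vert ue_i\Vert<\infty$. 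Notice what makes that last sum finite: the orthonormal basis of $H$ is indexed by the same finite set $\Bbb I$ that indexes the g-Bessel sequences. The paper's argument therefore silently assumes $\dim H=\vert\Bbb I\vert<\infty$, in which case the conclusion is vacuous (every bounded operator on a finite-dimensional space is trace class). If $H$ is infinite-dimensional, the trace must be computed over an infinite basis, and the terms $\sqrt{B_1B_2}\Vert ue_i\Vert$ need not be summable, so the estimate collapses.

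Your diagnosis that finiteness of $\Bbb I$ alone cannot suffice is correct, and is witnessed by a one-term example: take $\Bbb I=\{1\}$, $H_1=H$ infinite-dimensional, $\Lambda_1=\Theta_1=C=C'=Id_H$; then $B_1=B_2=1$ and $\Phi=Id_H$, which is not trace class. So the statement as printed is false without an extra hypothesis, and the paper's proof conceals rather than fills this gap. Your finite-rank argument (each $U_iV_i$ has rank at most $\dim H_i$, hence $\mathrm{rank}(\Phi)\leq\sum_{i\in\Bbb I}\dim H_i$) is sound, and it proves the theorem in the only regime where it is both true and non-trivial, namely $\dim H_i<\infty$ for each $i$ with $H$ arbitrary; the Hilbert--Schmidt repackaging works under the slightly weaker hypothesis that each $\Lambda_i,\Theta_i$ is Hilbert--Schmidt. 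By contrast, what the paper's Cauchy--Schwarz computation buys, granting its hidden assumption, is only the quantitative bound $\mathrm{tr}(\vert\Phi\vert)\leq\vert\Bbb I\vert\sqrt{B_1B_2}$. In short: your proof is correct under the hypothesis you explicitly flag, that hypothesis (or a substitute such as the Hilbert--Schmidt condition) is genuinely necessary, and the paper's own proof fails at precisely the point you predicted.
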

\begin{proof}
Suppose that $\Phi=u\vert\Phi\vert$ is the polar decomposition of
$\Phi$ where $u\in\mathcal{B}(H)$ is a partial isometry. So,
$\vert\Phi\vert=u^*\Phi$. Let $\{e_i\}_{i\in\Bbb I}$ be an
orthonormal basis for $H$. We have
\begin{align*}
\mathop{\rm tr}(\vert\Phi\vert)&=\sum_{i\in\Bbb I}\langle\vert\Phi\vert e_i, e_i\rangle\\
&=\sum_{i\in\Bbb I}\langle T^*_{\Theta}e_i, T^*_{\Lambda}ue_i\rangle\\
&=\sum_{i\in\Bbb I}\langle\lbrace (C^*\Theta^*_j\Theta_j C')^{\frac{1}{2}}e_i \rbrace_{j\in\Bbb I},\lbrace (C^*\Lambda^*_j\Lambda_j C')^{\frac{1}{2}}ue_i \rbrace_{j\in\Bbb I}\rangle\\
&\leq\sum_{i\in\Bbb I}\sum_{j\in\Bbb I}\Vert(C^*\Theta^*_j\Theta_j C')^{\frac{1}{2}}e_i\Vert \Vert(C^*\Lambda^*_j\Lambda_j C')^{\frac{1}{2}}ue_i\Vert\\
&\leq\sum_{i\in\Bbb I}\big(\sum_{j\in\Bbb I}\Vert(C^*\Theta^*_j\Theta_j C')^{\frac{1}{2}}e_i\Vert^2\big)^{\frac{1}{2}}\big(\sum_{j\in\Bbb I}\Vert(C^*\Lambda^*_j\Lambda_j C')^{\frac{1}{2}}ue_i\Vert^2\big)^{\frac{1}{2}}\\
&\leq\sum_{i\in\Bbb I}\sqrt{B_1 B_2}\Vert ue_i\Vert <\infty.
\end{align*}
\end{proof}
 \section{Controlled g-dual frames}
 In this section by considering that $C=C'$ and $\Lambda$ is a
$C^2$-controlled g-frame for $H$, we define a canonical controlled
g-dual and show that this canonical dual is a g-frame and gives rise
to expand coefficients with the minimal norm.
\begin{defn}
Suppose that $\Lambda=\{\Lambda_i\in\mathcal{B}(H, H_i)\}_{i\in\Bbb
I}$ and
$\widetilde{\Lambda}:=\{\widetilde{\Lambda}_i\in\mathcal{B}(H,
H_i)\}_{i\in\Bbb I}$ are two $CC'$-controlled g-Bessel sequence for
$H$ with  synthesis operators $T_{\Lambda}$ and
$T_{\widetilde{\Lambda}}$, respectively. We say that
$\widetilde{\Lambda}$ is a $CC'$-controlled g-dual of $\Lambda$ if
$$T_{\Lambda} T^*_{\widetilde{\Lambda}}=Id_H.$$
In this case $\Lambda, \widetilde{\Lambda}$ are said
$CC'$-controlled g-dual pair also.
\end{defn}
The proof of the following is straightforward.
\begin{prop}
If $\Lambda, \widetilde{\Lambda}$ are $CC'$-controlled g-dual pair,
then the following statements are equivalent:
\begin{enumerate}
\item[(i)] $T_{\Lambda} T^*_{\widetilde{\Lambda}}=Id_H$;
\item[(ii)] $T_{\widetilde{\Lambda}} T^*_{\Lambda}=Id_H$;
\item[(iii)] $\langle f, g\rangle=\langle T^*_{\widetilde{\Lambda}}f, T^*_{\Lambda}g\rangle$,  $f, g\in H$.
\end{enumerate}
Also, for every $f\in H$, we have
\begin{equation}\label{dual}
f=\sum_{i\in\Bbb
I}(C^*\Lambda^*_i\Lambda_iC')^{\frac{1}{2}}(C^*\widetilde{\Lambda}^*_i\widetilde{\Lambda}_iC')^{\frac{1}{2}}f.
\end{equation}
\end{prop}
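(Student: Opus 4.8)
The plan is to treat the three statements as consequences of elementary adjoint algebra and then read the series expansion off statement (i) by unwinding the definitions of the synthesis and analysis operators. To organize the computation, I would write $G_i^\Lambda := (C^*\Lambda^*_i\Lambda_iC')^{\frac{1}{2}}$ and $G_i^{\widetilde{\Lambda}} := (C^*\widetilde{\Lambda}^*_i\widetilde{\Lambda}_iC')^{\frac{1}{2}}$, so that the analysis operators act by $T^*_\Lambda f = \{G_i^\Lambda f\}_{i\in\Bbb I}$ and $T^*_{\widetilde{\Lambda}} f = \{G_i^{\widetilde{\Lambda}} f\}_{i\in\Bbb I}$, while the synthesis operator is recovered as $T_\Lambda(\{h_i\}_{i\in\Bbb I}) = \sum_{i\in\Bbb I} G_i^\Lambda h_i$ on its representation space. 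Consequently
$$T_\Lambda T^*_{\widetilde{\Lambda}} f = \sum_{i\in\Bbb I} G_i^\Lambda G_i^{\widetilde{\Lambda}} f = \sum_{i\in\Bbb I}(C^*\Lambda^*_i\Lambda_iC')^{\frac{1}{2}}(C^*\widetilde{\Lambda}^*_i\widetilde{\Lambda}_iC')^{\frac{1}{2}}f,$$
so that once (i) is in force, the final identity \eqref{dual} follows immediately.

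For the equivalences, (i) $\Leftrightarrow$ (ii) is merely passage to adjoints: since $(T_\Lambda T^*_{\widetilde{\Lambda}})^* = T_{\widetilde{\Lambda}} T^*_\Lambda$ and $Id_H^* = Id_H$, the identity $T_\Lambda T^*_{\widetilde{\Lambda}} = Id_H$ holds if and only if $T_{\widetilde{\Lambda}} T^*_\Lambda = Id_H$. For (i) $\Leftrightarrow$ (iii), I would apply the defining adjoint relation $\langle T_\Lambda h, g\rangle = \langle h, T^*_\Lambda g\rangle$ with $h = T^*_{\widetilde{\Lambda}} f$, which yields $\langle T_\Lambda T^*_{\widetilde{\Lambda}} f, g\rangle = \langle T^*_{\widetilde{\Lambda}} f, T^*_\Lambda g\rangle$ for all $f,g\in H$. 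Then (i) asserts exactly that the left-hand side equals $\langle f, g\rangle$ for all such $f,g$, and since a bounded operator on $H$ is determined by its inner products, this is equivalent to (iii).

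The only real subtlety — and the step I would verify most carefully — is that the composition $T_\Lambda T^*_{\widetilde{\Lambda}}$ is genuinely well-defined as an operator on $H$, because $T^*_{\widetilde{\Lambda}}$ takes values in the representation space $K_{\widetilde{\Lambda}}$ attached to $\widetilde{\Lambda}$, whereas $T_\Lambda$ was introduced with domain the representation space $K_\Lambda$ attached to $\Lambda$. I would resolve this by reading $T_\Lambda$ through its action $\{h_i\}_{i\in\Bbb I} \mapsto \sum_{i\in\Bbb I} G_i^\Lambda h_i$ and checking that $\sum_{i\in\Bbb I} G_i^\Lambda G_i^{\widetilde{\Lambda}} f$ converges and defines a bounded operator; the Cauchy--Schwarz estimate in the $\ell^2$ direct sum, carried out exactly as in the proof of Theorem \ref{th1} and using that both $\Lambda$ and $\widetilde{\Lambda}$ are $CC'$-controlled g-Bessel, supplies the convergence and boundedness. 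Once this is settled, the three equivalences and the expansion are all immediate, which is precisely why the statement may be called straightforward.
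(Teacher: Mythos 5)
Your proof is correct. There is, however, nothing in the paper to compare it against: the authors give no argument at all, prefacing the proposition only with the sentence that its proof is ``straightforward.'' Your write-up supplies precisely the details they suppress --- reading the synthesis operator through its action $\{h_i\}_{i\in\Bbb I}\mapsto\sum_{i\in\Bbb I}(C^*\Lambda_i^*\Lambda_iC')^{\frac{1}{2}}h_i$, getting (i)$\Leftrightarrow$(ii) by taking adjoints of a bounded operator, (i)$\Leftrightarrow$(iii) from the defining relation of the adjoint together with the fact that a bounded operator is determined by its sesquilinear form, and \eqref{dual} by unwinding the composition $T_\Lambda T^*_{\widetilde{\Lambda}}$. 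The one place where you go genuinely beyond the paper is the domain issue you flag at the end: as the paper defines it, $T_\Lambda$ acts only on the representation space $K$ built from $\Lambda$, while $T^*_{\widetilde{\Lambda}}$ takes values in the analogous space built from $\widetilde{\Lambda}$, so the composition $T_\Lambda T^*_{\widetilde{\Lambda}}$ appearing in the very definition of a controlled g-dual pair is not literally defined; the paper never acknowledges this. Your repair --- extending $T_\Lambda$ by the displayed formula and using the two Bessel bounds with Cauchy--Schwarz in the $\ell^2$-sum to obtain unconditional convergence and the bound $\sqrt{B_1B_2}$ (the same estimate the paper later carries out for $S_{\Bbb J}$ in Section 4) --- is exactly what is needed, and it makes the statement itself, not merely its proof, well posed.
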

Now, we want to present the \textit{canonical controlled g-dual} by
\eqref{s1} in the case that $C=C'$ and $\Lambda$ is a
$C^2$-controlled g-frame for $H$. Let
$\Gamma_i:=\Lambda_iCS_{C}^{-1}$. Therefore, for each $f\in H$
\begin{equation}\label{d1}
f=\sum_{i\in\Bbb I}C^*\Lambda^*_i\Gamma_i f=\sum_{i\in\Bbb
I}\Gamma^*_i\Lambda_i Cf.
\end{equation}
We show that $\Gamma:=\{\Gamma_i\}_{i\in\Bbb I}$ is a  g-frame for
$H$. Let $f\in H$ and $A_C, B_C$ be the frame bounds of $\Lambda$.
Then
\begin{align*}
\sum_{i\in\Bbb I}\Vert\Gamma_i f\Vert^2&=\sum_{i\in\Bbb I}\langle \Lambda_iCS_{C}^{-1}f, \Lambda_iCS_{C}^{-1}f\rangle\\
&=\sum_{i\in\Bbb I}\langle C^*\Lambda^*_i\Lambda_iCS^{-1}_{C}f, S^{-1}_{C}f\rangle\\
&=\langle f, S^{-1}_{C}f\rangle\\
&\leq\frac{1}{A_C}\Vert f\Vert^2.
\end{align*}
On the other hand,
\begin{align*}
\Vert f\Vert^4&=\langle f, f\rangle^2\\
&=\langle\sum_{i\in\Bbb I}\Gamma^*_i\Lambda_i Cf, f\rangle^2\\
&=\langle\sum_{i\in\Bbb I}\Lambda_i Cf, \Gamma_i f\rangle^2\\
&\leq\big(\sum_{i\in\Bbb I}\Vert \Lambda_i Cf\Vert^2\big)\big(\sum_{i\in\Bbb I}\Vert\Gamma_if\Vert^2\big)\\
&\leq B_C\Vert f\Vert^2\big(\sum_{i\in\Bbb
I}\Vert\Gamma_if\Vert^2\big).
\end{align*}
Finally, we conclude that
$$\frac{1}{B_C}\Vert f\Vert^2\leq\sum_{i\in\Bbb I}\Vert\Gamma_if\Vert^2\leq\frac{1}{A_C}\Vert f\Vert^2.$$
The following theorem shows that the canonical controlled g-dual
gives rise to expand coefficients with the minimal norm.
\begin{thm}
Let $\Lambda=\{\Lambda_i\in\mathcal{B}(H, H_i)\}_{i\in\Bbb I}$ be a
$C^2$-controlled g-frame for $H$ and
$\Gamma_i=\Lambda_iCS_{C}^{-1}$. If $f$ has a representation
$f=\sum_{i\in \Bbb I}C^*\Lambda_i^* g_i$, for some $g_{i}\in H_{i}$.
Then we have
$$\sum_{i\in\Bbb I}\Vert g_i\Vert^2=\sum_{i\in\Bbb I}\Vert\Gamma_i f\Vert^2+\sum_
{i\in\Bbb I}\Vert g_i-\Gamma_i f\Vert^2,\ \ \ f\in H$$
\end{thm}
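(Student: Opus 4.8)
The plan is to reduce the claimed identity to a single orthogonality relation in $(\sum_{i\in\Bbb I}\oplus H_i)_{\ell^{2}}$. Writing $g_i=(g_i-\Gamma_i f)+\Gamma_i f$ and expanding the norm pointwise gives
\[
\|g_i\|^2=\|g_i-\Gamma_i f\|^2+\|\Gamma_i f\|^2+2\,\mathrm{Re}\,\langle g_i-\Gamma_i f,\Gamma_i f\rangle .
\]
Summing over $i$, the asserted equality is therefore equivalent to the vanishing of the cross term, i.e. to $\sum_{i\in\Bbb I}\langle g_i-\Gamma_i f,\Gamma_i f\rangle=0$. Since $\Gamma=\{\Gamma_i\}$ was just shown to be a g-frame, $\{\Gamma_i f\}_{i\in\Bbb I}$ lies in $(\sum_{i\in\Bbb I}\oplus H_i)_{\ell^{2}}$, so these inner products are summable. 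If $\sum_i\|g_i\|^2=\infty$ then, by the triangle inequality in $(\sum_{i\in\Bbb I}\oplus H_i)_{\ell^{2}}$ together with $\sum_i\|\Gamma_i f\|^2<\infty$, the right-hand side is infinite as well and there is nothing to prove; hence I may assume $\{g_i\}\in(\sum_{i\in\Bbb I}\oplus H_i)_{\ell^{2}}$.

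The heart of the argument is the identity $\sum_{i\in\Bbb I}\langle g_i,\Gamma_i f\rangle=\sum_{i\in\Bbb I}\|\Gamma_i f\|^2$. For the left-hand side I recall $\Gamma_i=\Lambda_iCS_{C}^{-1}$ and use $\langle g_i,\Lambda_iCS_{C}^{-1}f\rangle=\langle C^*\Lambda_i^* g_i,S_{C}^{-1}f\rangle$; interchanging the sum with the inner product, which is justified by the $\ell^2$-summability above and the boundedness of the analysis operator, yields
\[
\sum_{i\in\Bbb I}\langle g_i,\Gamma_i f\rangle=\Big\langle \sum_{i\in\Bbb I}C^*\Lambda_i^* g_i,\,S_{C}^{-1}f\Big\rangle=\langle f,S_{C}^{-1}f\rangle,
\]
where the last step uses the hypothesis $f=\sum_{i\in\Bbb I}C^*\Lambda_i^* g_i$ and the self-adjointness of $S_{C}^{-1}$. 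On the other hand, the computation carried out immediately before the statement already established $\sum_{i\in\Bbb I}\|\Gamma_i f\|^2=\langle f,S_{C}^{-1}f\rangle$. Comparing the two expressions gives the desired identity.

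Subtracting then produces $\sum_{i\in\Bbb I}\langle g_i-\Gamma_i f,\Gamma_i f\rangle=0$, so in particular its real part vanishes, the cross term drops out after summation, and the Pythagorean identity follows. The one point requiring care is the interchange of summation and inner product in the key identity, which rests on both $\{g_i\}$ and $\{\Gamma_i f\}$ belonging to $(\sum_{i\in\Bbb I}\oplus H_i)_{\ell^{2}}$. I would also emphasize that, because $S_{C}^{-1}$ is positive, $\langle f,S_{C}^{-1}f\rangle$ is real; this is precisely what forces the full complex sum $\sum_{i\in\Bbb I}\langle g_i-\Gamma_i f,\Gamma_i f\rangle$, and not merely its real part, to be zero.
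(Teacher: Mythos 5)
Your proposal is correct and follows essentially the same route as the paper: both arguments reduce the identity to showing $\sum_{i\in\Bbb I}\langle g_i,\Gamma_i f\rangle=\sum_{i\in\Bbb I}\Vert\Gamma_i f\Vert^2$, which is obtained by moving $C^*\Lambda_i^*$ across the inner product, invoking the hypothesis $f=\sum_{i\in\Bbb I}C^*\Lambda_i^*g_i$ together with the reconstruction formula, and noting the resulting quantity $\langle f,S_C^{-1}f\rangle$ is real. Your added remarks on $\ell^2$-summability and the justification of interchanging sum and inner product only make explicit what the paper leaves implicit.
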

\begin{proof}
Assume that $f\in H$. We get by \eqref{d1}
\begin{align*}
\sum_{i\in\Bbb I}\Vert\Gamma_i f\Vert^2&=\sum_{i\in\Bbb I}\langle \Gamma_i f, \Lambda_i CS_{C}^{-1}f\rangle\\
&=\sum_{i\in\Bbb I}\langle C^*\Lambda_i^*\Gamma_i f, S_{C}^{-1}f\rangle\\
&=\sum_{i\in\Bbb I}\langle C^*\Lambda_i^*g_i, S_{C}^{-1}f\rangle\\
&=\sum_{i\in\Bbb I}\langle g_i, \Lambda_i CS_{C}^{-1}f\rangle\\
&=\sum_{i\in\Bbb I}\langle g_i, \Gamma_i f\rangle.
\end{align*}
Therefore, $\mbox{Im}\Big(\sum_{i\in\Bbb I}\langle g_i, \Gamma_i
f\rangle\Big)=0$ and the conclusion follows.
\end{proof}
\section{Some equalities and inequalities}
In this section, we extend some known equalities and inequalities
for controlled g-frames. Assume that $\Lambda, \widetilde{\Lambda}$
are $CC'$-controlled g-dual pair and $\Bbb J\subset\Bbb I$. We
define $$S_{\Bbb J}f:=\sum_{i\in\Bbb
J}(C^*\Lambda^*_i\Lambda_iC')^{\frac{1}{2}}(C^*\widetilde{\Lambda}^*_i\widetilde{\Lambda}_iC')^{\frac{1}{2}}f
, \ \ \ \ \ f\in H.$$
 It is clear that $S_{\Bbb J}\in\mathcal{B}(H)$ and $S_{\Bbb J}+S_{\Bbb J^{c}}=Id_H$ where  $\Bbb J^c$ is the complement of $\Bbb J$. Indeed, if $B_1$ and $B_2$ are the bounds of $\Lambda$ and $\widetilde{\Lambda}$ respectively, then
\begin{align*}
\Vert S_{\Bbb J}f\Vert^2&=\Big(\sup_{\Vert g\Vert=1}\vert\langle S_{\Bbb J}f, g\rangle\vert\Big)^2\\
&\leq\Big(\sup_{\Vert g\Vert=1}\sum_{i\in\Bbb J}\big\vert\big\langle(C^*\Lambda^*_i\Lambda_iC')^{\frac{1}{2}}(C^*\widetilde{\Lambda}^*_i\widetilde{\Lambda}_iC')^{\frac{1}{2}}f, g\big\rangle\big\vert\Big)^2\\
&\leq\Big(\sum_{i\in\Bbb I}\Vert(C^*\Lambda^*_i\Lambda_iC')^{\frac{1}{2}}f\Vert^2\Big)\Big(\sup_{\Vert g\Vert=1}\sum_{i\in\Bbb I}\Vert(C^*\widetilde{\Lambda}^*_i\widetilde{\Lambda}_iC')^{\frac{1}{2}}g\Vert^2\Big)\\
&\leq B_1 B_2\Vert f\Vert^2.
\end{align*}
So, $S_{\Bbb J}$ is bounded.
\begin{thm}
If $f\in H$ then,
\begin{eqnarray*}
\sum_{i\in\Bbb J}\langle (C^*\widetilde{\Lambda}^*_i\widetilde{\Lambda}_iC')^{\frac{1}{2}}f, (C^*\Lambda^*_i\Lambda_iC')^{\frac{1}{2}}f\rangle-\Vert S_{\Bbb J}f\Vert^2\\
=\sum_{i\in\Bbb J^c}\overline{\langle
(C^*\widetilde{\Lambda}^*_i\widetilde{\Lambda}_iC')^{\frac{1}{2}}f,
(C^*\Lambda^*_i\Lambda_iC')^{\frac{1}{2}}f\rangle}-\Vert S_{\Bbb
J^c}f\Vert^2
\end{eqnarray*}
\end{thm}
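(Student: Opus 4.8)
The plan is to show that both sides of the asserted identity are in fact equal to the single quantity $\langle S_{\Bbb J}f, S_{\Bbb J^c}f\rangle$, using nothing more than the decomposition $S_{\Bbb J}+S_{\Bbb J^c}=Id_H$ already recorded in the text. The first step is to rewrite the two sums in operator form. Because each $C^*\Lambda^*_i\Lambda_iC'$ (and each $C^*\widetilde{\Lambda}^*_i\widetilde{\Lambda}_iC'$) is assumed positive, its square root is self-adjoint, so moving $(C^*\Lambda^*_i\Lambda_iC')^{\frac{1}{2}}$ across the inner product gives
$$\sum_{i\in\Bbb J}\langle (C^*\widetilde{\Lambda}^*_i\widetilde{\Lambda}_iC')^{\frac{1}{2}}f,(C^*\Lambda^*_i\Lambda_iC')^{\frac{1}{2}}f\rangle=\sum_{i\in\Bbb J}\langle (C^*\Lambda^*_i\Lambda_iC')^{\frac{1}{2}}(C^*\widetilde{\Lambda}^*_i\widetilde{\Lambda}_iC')^{\frac{1}{2}}f,f\rangle=\langle S_{\Bbb J}f,f\rangle,$$
while taking complex conjugates termwise and pulling the self-adjoint factor to the right slot yields
$$\sum_{i\in\Bbb J^c}\overline{\langle (C^*\widetilde{\Lambda}^*_i\widetilde{\Lambda}_iC')^{\frac{1}{2}}f,(C^*\Lambda^*_i\Lambda_iC')^{\frac{1}{2}}f\rangle}=\sum_{i\in\Bbb J^c}\langle f,(C^*\Lambda^*_i\Lambda_iC')^{\frac{1}{2}}(C^*\widetilde{\Lambda}^*_i\widetilde{\Lambda}_iC')^{\frac{1}{2}}f\rangle=\langle f,S_{\Bbb J^c}f\rangle.$$

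With these two identifications in hand, I would finish by a short symmetric computation. Since $f-S_{\Bbb J}f=S_{\Bbb J^c}f$, the left-hand side is $\langle S_{\Bbb J}f,f\rangle-\Vert S_{\Bbb J}f\Vert^2=\langle S_{\Bbb J}f,\,f-S_{\Bbb J}f\rangle=\langle S_{\Bbb J}f,S_{\Bbb J^c}f\rangle$. Dually, since $f-S_{\Bbb J^c}f=S_{\Bbb J}f$, the right-hand side is $\langle f,S_{\Bbb J^c}f\rangle-\Vert S_{\Bbb J^c}f\Vert^2=\langle f-S_{\Bbb J^c}f,\,S_{\Bbb J^c}f\rangle=\langle S_{\Bbb J}f,S_{\Bbb J^c}f\rangle$. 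The two sides coincide, which is exactly the claimed equality.

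The only genuinely delicate point is the asymmetric appearance of the complex conjugate: in the classical (self-adjoint) frame equality no conjugate is needed, but here $S_{\Bbb J}$ is a sum of products $(C^*\Lambda^*_i\Lambda_iC')^{\frac{1}{2}}(C^*\widetilde{\Lambda}^*_i\widetilde{\Lambda}_iC')^{\frac{1}{2}}$ and is not self-adjoint, so $\langle S_{\Bbb J}f,f\rangle$ may be genuinely complex. Consequently one must be careful to match $\sum_{\Bbb J}(\cdots)$ with $\langle S_{\Bbb J}f,f\rangle$ but $\sum_{\Bbb J^c}\overline{(\cdots)}$ with $\langle f,S_{\Bbb J^c}f\rangle$ (and not with $\langle S_{\Bbb J^c}f,f\rangle$); choosing the correct slot is what makes both expressions collapse to the common value $\langle S_{\Bbb J}f,S_{\Bbb J^c}f\rangle$. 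I note that one could instead invoke Lemma~\ref{l2} with $u=S_{\Bbb J}$, $v=S_{\Bbb J^c}$ to obtain $S_{\Bbb J}-S_{\Bbb J^c}=S_{\Bbb J}^2-S_{\Bbb J^c}^2$, but because $\langle S_{\Bbb J}^2 f,f\rangle\neq\Vert S_{\Bbb J}f\Vert^2$ in the absence of self-adjointness, that route requires extra bookkeeping, whereas the direct expansion above handles the non-self-adjointness cleanly.
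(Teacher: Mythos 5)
Your proof is correct and takes essentially the same route as the paper: the paper likewise identifies the two sums with $\langle S_{\Bbb J}f,f\rangle$ and $\langle f,S_{\Bbb J^c}f\rangle$, and its chain of equalities passes through $\langle S^{\ast}_{\Bbb J^c}S_{\Bbb J}f,f\rangle$, which is exactly your common value $\langle S_{\Bbb J}f,S_{\Bbb J^c}f\rangle$, obtained from the same decomposition $S_{\Bbb J}+S_{\Bbb J^c}=Id_H$. Your meet-in-the-middle organization (and your explicit handling of the complex conjugate, which the paper leaves implicit) is just a cleaner write-up of the identical computation.
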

\begin{proof}
 Let $f\in H$. We obtain
 \begin{align*}
\sum_{i\in\Bbb J}\langle (C^*\widetilde{\Lambda}^*_i\widetilde{\Lambda}_iC')^{\frac{1}{2}}f, (C^*\Lambda^*_i\Lambda_iC')^{\frac{1}{2}}f\rangle-\Vert S_{\Bbb J}f\Vert^2&=\langle S_{\Bbb J}f, f\rangle-\Vert S_{\Bbb J}f\Vert^2\\
&=\langle S_{\Bbb J}f,f\rangle-\langle S^{\ast}_{\Bbb J} S_{\Bbb J}f,f\rangle\\
&=\langle(id_{H}-S_{\Bbb J})^{\ast}S_{\Bbb J}f,f\rangle\\
&=\langle S^{\ast}_{\Bbb J^c}(id_{H}-S_{\Bbb J^c})f,f\rangle\\
&=\langle S^{\ast}_{\Bbb J^c}f,f\rangle-\langle S^{\ast}_{\Bbb J^c}S_{\Bbb J^c}f,f\rangle\\
&=\langle f,S_{\Bbb J^c}f\rangle-\langle S_{\Bbb J^c}f,S_{\Bbb J^c}f\rangle.\\
\end{align*}
Now, the proof is completed.
\end{proof}
\begin{cor}\label{cor1}
If $\Lambda$ is a $CC'$-controlled Parseval g-frame for $H$, then
\begin{small}
\begin{align*}
\sum_{i\in\Bbb J}\Vert(C^*\Lambda^*_i\Lambda_iC')f\Vert^2-\Vert \sum_{i\in\Bbb J}(C^*\Lambda^*_i &\Lambda_iC')f\Vert^2\\
&=\sum_{i\in\Bbb J^c}\Vert(C^*\Lambda^*_i\Lambda_iC')f\Vert^2-\Vert
\sum_{i\in\Bbb J^c}(C^*\Lambda^*_i\Lambda_iC')f\Vert^2.
\end{align*}
\end{small}
Moreover,
$$\sum_{i\in\Bbb J}\Vert(C^*\Lambda^*_i\Lambda_iC')f\Vert^2+\Vert \sum_{i\in\Bbb J^c}(C^*\Lambda^*_i\Lambda_iC')f\Vert^2\geq\frac{3}{4}\Vert f\Vert^2.$$
\end{cor}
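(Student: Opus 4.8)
The plan is to obtain Corollary \ref{cor1} as a specialization of the preceding theorem to the self-dual case. Since $\Lambda$ is Parseval we have $A_{CC'}=B_{CC'}=1$, so its frame operator is $S_{CC'}=Id_H$ and therefore $T_{\Lambda}T^*_{\Lambda}=Id_H$; thus $\Lambda$ is a $CC'$-controlled g-dual of itself, and I would take $\widetilde{\Lambda}=\Lambda$ in the preceding theorem. With this choice each factor $(C^*\widetilde{\Lambda}^*_i\widetilde{\Lambda}_iC')^{\frac12}$ becomes $(C^*\Lambda^*_i\Lambda_iC')^{\frac12}$, so that $S_{\Bbb J}f=\sum_{i\in\Bbb J}(C^*\Lambda^*_i\Lambda_iC')f$, and each summand of the mixed term collapses to $\langle(C^*\Lambda^*_i\Lambda_iC')^{\frac12}f,(C^*\Lambda^*_i\Lambda_iC')^{\frac12}f\rangle=\langle(C^*\Lambda^*_i\Lambda_iC')f,f\rangle$. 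Substituting these into the theorem's conclusion, and noting that these terms are real so the conjugation is harmless, yields the first (equality) assertion immediately.

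For the inequality I would abbreviate $Q_{\Bbb J}:=\sum_{i\in\Bbb J}C^*\Lambda^*_i\Lambda_iC'$ and $S:=Q_{\Bbb J^c}=\sum_{i\in\Bbb J^c}C^*\Lambda^*_i\Lambda_iC'$. Because each $C^*\Lambda^*_i\Lambda_iC'$ is positive, $S$ is self-adjoint, and Parsevalness gives $Q_{\Bbb J}+S=S_{CC'}=Id_H$, that is $Q_{\Bbb J}=Id_H-S$. The left-hand side of the inequality is then $\langle Q_{\Bbb J}f,f\rangle+\Vert Sf\Vert^2=\langle(Id_H-S)f,f\rangle+\langle S^2f,f\rangle=\langle(S^2-S+Id_H)f,f\rangle$, where self-adjointness of $S$ was used to rewrite $\Vert Sf\Vert^2=\langle S^2f,f\rangle$. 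Applying Lemma \ref{l0}(i) to the self-adjoint operator $S$ with $a=1$, $b=-1$, $c=1$ gives $\langle(S^2-S+Id_H)f,f\rangle\geq\frac{4ac-b^2}{4a}\Vert f\Vert^2=\frac34\Vert f\Vert^2$, which is the desired bound; equivalently one notes $S^2-S+Id_H=(S-\frac12 Id_H)^2+\frac34 Id_H$.

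The computations above are routine; the one point requiring care is the identification in the first step. One must confirm that Parsevalness really forces $S_{CC'}=Id_H$ (so that $\widetilde{\Lambda}=\Lambda$ is a legitimate dual and the two square-root factors in $S_{\Bbb J}$ fuse into a single $C^*\Lambda^*_i\Lambda_iC'$), and that the first summand occurring in the statement is the controlled coefficient $\langle(C^*\Lambda^*_i\Lambda_iC')f,f\rangle=\Vert(C^*\Lambda^*_i\Lambda_iC')^{\frac12}f\Vert^2$, which is exactly the quantity produced by the theorem. Once these identifications are in place, the equality drops out of the preceding theorem and the inequality from Lemma \ref{l0}, with the positivity of each $C^*\Lambda^*_i\Lambda_iC'$ (already assumed earlier in the paper) guaranteeing that the square roots exist and that $Q_{\Bbb J}$ and $S$ are self-adjoint.
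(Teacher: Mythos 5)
Your proposal is correct and follows essentially the same route as the paper: the equality comes from specializing the preceding theorem to $\widetilde{\Lambda}=\Lambda$ (legitimate because Parsevalness forces $S_{CC'}=Id_H$, hence self-duality), and the inequality comes from Lemma \ref{l0}(i) with $a=1$, $b=-1$, $c=1$ applied to the quadratic $x^2-x+1$ --- the paper evaluates it in $S_{\Bbb J}$ after expanding $S_{\Bbb J^c}^2=(id_H-S_{\Bbb J})^2$, while you keep it in $S_{\Bbb J^c}$, an immaterial difference. You were also right to flag the identification of $\Vert(C^*\Lambda^*_i\Lambda_iC')f\Vert^2$ with $\langle(C^*\Lambda^*_i\Lambda_iC')f,f\rangle=\Vert(C^*\Lambda^*_i\Lambda_iC')^{\frac{1}{2}}f\Vert^2$: the paper makes exactly this identification silently in the first line of its proof, and the statement is only what is actually proved under that reading.
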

\begin{proof}
If $f\in H$, we obtain
\begin{align*}
\sum_{i\in\Bbb J}\Vert(C^*\Lambda^*_i\Lambda_iC')f\Vert^2+\Vert
S_{\Bbb J^c}f\Vert^2&=
\big\langle(S_{\Bbb J}+S^2_{\Bbb J^c})f, f\big\rangle\\
&=\big\langle(S_{\Bbb J}+id_H-2S_{\Bbb J}+S^2_{\Bbb J})f, f\big\rangle\\
&=\langle(id_H-S_{\Bbb J}+S_{\Bbb J}^2)f, f\rangle.
\end{align*}
Now, by Lemma \ref{l0} for $a=1$, $b=-1$ and $c=1$ the inequality
holds.
\end{proof}
\begin{cor}\label{cor2}
If $\Lambda$ is a $CC'$-controlled Parseval g-frame for $H$, then
$$0\leq S_{\Bbb J}-S_{\Bbb J}^2\leq\frac{1}{4}Id_H.$$
\end{cor}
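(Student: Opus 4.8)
The plan is to reduce the whole statement to a single bounded, self-adjoint, positive operator $S_{\Bbb J}$ with $0\leq S_{\Bbb J}\leq Id_H$, and then read off both inequalities — one from elementary positivity and one from Lemma \ref{l0}. First I would pin down the shape of $S_{\Bbb J}$ in the Parseval setting. Since $\Lambda$ is a $CC'$-controlled Parseval g-frame, its frame operator is $Id_H$, so $\Lambda$ is its own $CC'$-controlled g-dual and we may take $\widetilde{\Lambda}=\Lambda$ in the definition of $S_{\Bbb J}$. Each summand then collapses, because $(C^*\Lambda^*_i\Lambda_iC')^{1/2}(C^*\Lambda^*_i\Lambda_iC')^{1/2}=C^*\Lambda^*_i\Lambda_iC'$, and hence
$$S_{\Bbb J}f=\sum_{i\in\Bbb J}C^*\Lambda^*_i\Lambda_iC'f.$$
By the standing assumption each $C^*\Lambda^*_i\Lambda_iC'$ is positive, so $S_{\Bbb J}$ is a sum of positive self-adjoint operators; it is bounded by the estimate preceding the previous theorem, hence self-adjoint and positive. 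Finally $S_{\Bbb J}+S_{\Bbb J^c}=Id_H$ forces $S_{\Bbb J^c}=Id_H-S_{\Bbb J}\geq 0$, which together with $S_{\Bbb J}\geq 0$ yields $0\leq S_{\Bbb J}\leq Id_H$.

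For the lower estimate I would simply factor
$$S_{\Bbb J}-S_{\Bbb J}^2=S_{\Bbb J}(Id_H-S_{\Bbb J})=S_{\Bbb J}S_{\Bbb J^c}.$$
Both factors are positive and they commute, since $S_{\Bbb J^c}=Id_H-S_{\Bbb J}$ is a polynomial in $S_{\Bbb J}$; the product of two commuting positive operators is positive, so $0\leq S_{\Bbb J}-S_{\Bbb J}^2$.

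For the upper estimate I would apply Lemma \ref{l0}(ii) to the self-adjoint operator $u=S_{\Bbb J}$ with the polynomial $v=au^2+bu+c$, choosing $a=-1$, $b=1$, $c=0$, so that $v=S_{\Bbb J}-S_{\Bbb J}^2$. Since $a<0$, the lemma gives
$$\sup_{\Vert f\Vert=1}\langle(S_{\Bbb J}-S_{\Bbb J}^2)f,f\rangle\leq\frac{4ac-b^2}{4a}=\frac{-1}{-4}=\frac14,$$
which is exactly $S_{\Bbb J}-S_{\Bbb J}^2\leq\frac14 Id_H$. Combining the two estimates completes the proof.

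The step I expect to demand the most care is the opening structural reduction: verifying that the Parseval hypothesis genuinely licenses the choice $\widetilde{\Lambda}=\Lambda$, and that the resulting $S_{\Bbb J}=\sum_{i\in\Bbb J}C^*\Lambda^*_i\Lambda_iC'$ is self-adjoint and positive with $0\leq S_{\Bbb J}\leq Id_H$. This rests on the positivity of each $C^*\Lambda^*_i\Lambda_iC'$ and on the boundedness already established for $S_{\Bbb J}$. Once $S_{\Bbb J}$ is known to be self-adjoint with spectrum in $[0,1]$, neither inequality presents any genuine analytic difficulty — the lower one is commuting positivity and the upper one is a one-line application of Lemma \ref{l0}.
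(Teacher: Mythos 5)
Your proposal is correct and follows essentially the same route as the paper's proof: factor $S_{\Bbb J}-S_{\Bbb J}^2=S_{\Bbb J}S_{\Bbb J^c}$ as a product of commuting positive operators for the lower bound, and apply Lemma \ref{l0} with $a=-1$, $b=1$, $c=0$ for the upper bound. Your opening reduction (taking $\widetilde{\Lambda}=\Lambda$ in the Parseval case so that $S_{\Bbb J}=\sum_{i\in\Bbb J}C^*\Lambda^*_i\Lambda_iC'$ is positive and self-adjoint) simply makes explicit what the paper leaves implicit when it asserts $0\leq S_{\Bbb J}S_{\Bbb J^c}$.
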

\begin{proof}
We have $S_{\Bbb J}S_{\Bbb J^c}=S_{\Bbb J^c}S_{\Bbb J}$. Then $0\leq
S_{\Bbb J}S_{\Bbb J^c}=S_{\Bbb J}-S_{\Bbb J}^2$. Also, by Lemma
\ref{l0}, we get
$$S_{\Bbb J}-S_{\Bbb J}^2\leq\frac{1}{4}id_H.$$
\end{proof}
\begin{thm}\label{t3}
Let $\Lambda$ be a $CC'$-controlled g-frame with $CC'$-controlled
g-frame operator $S_{CC'}$. Suppose that $S^{-\frac{1}{2}}_{CC'}$
commutes with $C, C'$. Then for each $f\in H$,
\begin{small}
$$\sum_{i\in\Bbb J}\Vert S^{-\frac{1}{2}}_{CC'}C^*\Lambda_i^*\Lambda_iC'f\Vert^2+\Vert S^{-\frac{1}{2}}_{CC'}S_{\Bbb J^c}f\Vert^2=\sum_{i\in\Bbb J^c}\Vert S^{-\frac{1}{2}}_{CC'}C^*\Lambda_i^*\Lambda_iC'f\Vert^2+\Vert S^{-\frac{1}{2}}_{CC'}S_{\Bbb J}f\Vert^2.$$
\end{small}
\end{thm}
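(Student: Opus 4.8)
The plan is to deduce the identity from the Parseval case already treated in Corollary \ref{cor1}. By Remark \ref{exam}, the hypothesis that $S^{-\frac{1}{2}}_{CC'}$ commutes with $C$ and $C'$ guarantees that the family $\Psi:=\{\Psi_i\}_{i\in\Bbb I}$ with $\Psi_i:=\Lambda_i S^{-\frac{1}{2}}_{CC'}$ is a Parseval $CC'$-controlled g-frame for $H$, so Corollary \ref{cor1} is available for $\Psi$. The whole argument then consists of rewriting that corollary, applied to a suitable vector, in terms of $\Lambda$.

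First I would record a factorization of the basic building block. Since $S_{CC'}$ is positive and self-adjoint, so is $S^{-\frac{1}{2}}_{CC'}$; taking adjoints in $S^{-\frac{1}{2}}_{CC'}C=CS^{-\frac{1}{2}}_{CC'}$ then shows that $S^{-\frac{1}{2}}_{CC'}$ also commutes with $C^*$. Using $\Psi_i^*=S^{-\frac{1}{2}}_{CC'}\Lambda_i^*$ together with the commutation relations we obtain
$$C^*\Psi_i^*\Psi_i C'=C^*S^{-\frac{1}{2}}_{CC'}\Lambda_i^*\Lambda_i S^{-\frac{1}{2}}_{CC'}C'=S^{-\frac{1}{2}}_{CC'}\,C^*\Lambda_i^*\Lambda_i C'\,S^{-\frac{1}{2}}_{CC'}.$$

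The key step is to apply Corollary \ref{cor1} to $\Psi$ not at $f$ but at $g:=S^{\frac{1}{2}}_{CC'}f$. Because $S^{-\frac{1}{2}}_{CC'}g=f$, the factorization gives $C^*\Psi_i^*\Psi_i C'g=S^{-\frac{1}{2}}_{CC'}C^*\Lambda_i^*\Lambda_i C'f$, and, pulling the bounded operator $S^{-\frac{1}{2}}_{CC'}$ out of the sums by continuity, $\sum_{i\in\Bbb J}C^*\Psi_i^*\Psi_iC'g=S^{-\frac{1}{2}}_{CC'}S_{\Bbb J}f$, where $S_{\Bbb J}f=\sum_{i\in\Bbb J}C^*\Lambda_i^*\Lambda_iC'f$ (and similarly for $\Bbb J^c$). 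Substituting these into the identity of Corollary \ref{cor1} for $\Psi$ turns it into
$$\sum_{i\in\Bbb J}\Vert S^{-\frac{1}{2}}_{CC'}C^*\Lambda_i^*\Lambda_iC'f\Vert^2-\Vert S^{-\frac{1}{2}}_{CC'}S_{\Bbb J}f\Vert^2=\sum_{i\in\Bbb J^c}\Vert S^{-\frac{1}{2}}_{CC'}C^*\Lambda_i^*\Lambda_iC'f\Vert^2-\Vert S^{-\frac{1}{2}}_{CC'}S_{\Bbb J^c}f\Vert^2,$$
and transposing the two subtracted norm terms across the equality produces exactly the asserted equation.

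I expect the only genuine subtlety to be the factorization step, where one must combine the self-adjointness of $S^{-\frac{1}{2}}_{CC'}$ (to transfer commutativity from $C$ to $C^*$) with the standing hypothesis that it commutes with $C$ and $C'$, so that $S^{-\frac{1}{2}}_{CC'}$ can be moved to the two outer ends. Everything after the substitution $g=S^{\frac{1}{2}}_{CC'}f$ is routine bookkeeping, the one interpretive point being that $S_{\Bbb J}$ here is read as the partial frame operator $\sum_{i\in\Bbb J}C^*\Lambda_i^*\Lambda_iC'$, which is precisely what makes the two sides of Corollary \ref{cor1} match the two sides of the claim.
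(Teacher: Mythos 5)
Your proposal is correct and follows essentially the same route as the paper's own proof: both pass to the Parseval $CC'$-controlled g-frame $\Lambda_i S^{-\frac{1}{2}}_{CC'}$ via Remark \ref{exam}, factor $C^*\Psi_i^*\Psi_iC'=S^{-\frac{1}{2}}_{CC'}C^*\Lambda_i^*\Lambda_iC'S^{-\frac{1}{2}}_{CC'}$ using the commutation hypothesis, apply Corollary \ref{cor1}, and substitute $S^{\frac{1}{2}}_{CC'}f$ for $f$. Your write-up is in fact slightly more careful than the paper's, since you explicitly justify the commutation of $S^{-\frac{1}{2}}_{CC'}$ with $C^*$ by self-adjointness, a step the paper uses silently.
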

\begin{proof}
Via Remark \ref{exam} and Corollary \ref{cor1}, if
$\Theta_i:=\Lambda_iS^{-\frac{1}{2}}_{CC'}$, then
\begin{align*}
\sum_{i\in\Bbb J}(C^*\Theta^*_i \Theta_iC')f&=\sum_{i\in\Bbb J}(C^*S_{CC'}^{-\frac{1}{2}}\Lambda^*_i \Lambda_i S_{CC'}^{-\frac{1}{2}}C')f\\
&=\sum_{i\in\Bbb J}(S_{CC'}^{-\frac{1}{2}}C^*\Lambda^*_i \Lambda_i C'S_{CC'}^{-\frac{1}{2}})f\\
&=S_{CC'}^{-\frac{1}{2}}S_{\Bbb J}S_{CC'}^{-\frac{1}{2}}f,
\end{align*}
and also
\begin{small}
\begin{align*}
\sum_{i\in\Bbb J}\Vert(C^*\Theta^*_i\Theta_iC')f\Vert^2-\Vert \sum_{i\in\Bbb J}(C^*\Theta^*_i &\Theta_iC')f\Vert^2=\\
&=\sum_{i\in\Bbb J^c}\Vert(C^*\Theta^*_i\Theta_iC')f\Vert^2-\Vert
\sum_{i\in\Bbb J^c}(C^*\Theta^*_i\Theta_iC')f\Vert^2.
\end{align*}
\end{small}
Now, by replacing $S_{CC'}^{\frac{1}{2}}f$ instead of $f$ in above
formulas, the proof is evident.
\end{proof}
\begin{cor}
Let $\Lambda$ be a $CC'$-controlled g-frame with  $CC'$-controlled
g-frame operator $S_{CC'}$. Suppose that $S^{-\frac{1}{2}}_{CC'}$
commutes with $C, C'$. Then
$$0\leq S_{\Bbb J}-S_{\Bbb J}S^{-1}_{CC'}S_{\Bbb J}\leq\frac{1}{4}S_{CC'}.$$
\end{cor}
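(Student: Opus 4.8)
The plan is to reduce this inequality to the Parseval case already settled in Corollary \ref{cor2}, by passing to the associated Parseval frame. Set $\Theta_i := \Lambda_i S^{-\frac{1}{2}}_{CC'}$. By Remark \ref{exam}, the hypothesis that $S^{-\frac{1}{2}}_{CC'}$ commutes with $C$ and $C'$ guarantees that $\Theta := \{\Theta_i\}_{i\in\Bbb I}$ is a Parseval $CC'$-controlled g-frame for $H$, so Corollary \ref{cor2} applies to it and yields
$$0 \leq S^{\Theta}_{\Bbb J} - (S^{\Theta}_{\Bbb J})^2 \leq \frac{1}{4}Id_H,$$
where $S^{\Theta}_{\Bbb J}f = \sum_{i\in\Bbb J}C^*\Theta^*_i\Theta_iC'f$.

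First I would rewrite $S^{\Theta}_{\Bbb J}$ in terms of $S_{\Bbb J}$, exactly as in the proof of Theorem \ref{t3}. Since $S^{-\frac{1}{2}}_{CC'}$ is self-adjoint and commutes with $C$ and $C'$ (hence, after taking adjoints, with $C^*$ and $C'^*$), one has $C^*\Theta^*_i\Theta_iC' = S^{-\frac{1}{2}}_{CC'}C^*\Lambda^*_i\Lambda_iC'S^{-\frac{1}{2}}_{CC'}$, and summing over $i \in \Bbb J$ gives
$$S^{\Theta}_{\Bbb J} = S^{-\frac{1}{2}}_{CC'}\,S_{\Bbb J}\,S^{-\frac{1}{2}}_{CC'}.$$

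Next I would square this to obtain $(S^{\Theta}_{\Bbb J})^2 = S^{-\frac{1}{2}}_{CC'}\,S_{\Bbb J}\,S^{-1}_{CC'}\,S_{\Bbb J}\,S^{-\frac{1}{2}}_{CC'}$, so that the central quantity in Corollary \ref{cor2} factors as
$$S^{\Theta}_{\Bbb J} - (S^{\Theta}_{\Bbb J})^2 = S^{-\frac{1}{2}}_{CC'}\big(S_{\Bbb J} - S_{\Bbb J}S^{-1}_{CC'}S_{\Bbb J}\big)S^{-\frac{1}{2}}_{CC'}.$$
The final step is to conjugate the two-sided bound by the positive operator $S^{\frac{1}{2}}_{CC'}$. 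Because $A \leq B$ implies $R^*AR \leq R^*BR$ for every bounded $R$, applying $S^{\frac{1}{2}}_{CC'}(\cdot)S^{\frac{1}{2}}_{CC'}$ to each term turns the inequality $0 \leq S^{\Theta}_{\Bbb J} - (S^{\Theta}_{\Bbb J})^2 \leq \frac{1}{4}Id_H$ into $0 \leq S_{\Bbb J} - S_{\Bbb J}S^{-1}_{CC'}S_{\Bbb J} \leq \frac{1}{4}S_{CC'}$, using $S^{\frac{1}{2}}_{CC'}S^{-\frac{1}{2}}_{CC'} = Id_H$ and $S^{\frac{1}{2}}_{CC'}Id_H\,S^{\frac{1}{2}}_{CC'} = S_{CC'}$, which is precisely the assertion.

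The computations are routine; the only point requiring care is the bookkeeping of the commutation relations, which is where the hypothesis that $S^{-\frac{1}{2}}_{CC'}$ commutes with $C$ and $C'$ enters — both to make $\Theta$ Parseval via Remark \ref{exam} and to pull $S^{-\frac{1}{2}}_{CC'}$ outside the sum defining $S^{\Theta}_{\Bbb J}$. I do not anticipate a genuine obstacle beyond verifying that conjugation by the positive square root $S^{\frac{1}{2}}_{CC'}$ preserves the operator ordering, which is standard.
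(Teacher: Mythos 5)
Your proposal is correct and follows essentially the same route as the paper: pass to the Parseval family $\Theta_i=\Lambda_iS^{-\frac{1}{2}}_{CC'}$ via Remark \ref{exam}, use the identity $\sum_{i\in\Bbb J}C^*\Theta^*_i\Theta_iC'=S^{-\frac{1}{2}}_{CC'}S_{\Bbb J}S^{-\frac{1}{2}}_{CC'}$ from the proof of Theorem \ref{t3}, apply Corollary \ref{cor2}, and conjugate by $S^{\frac{1}{2}}_{CC'}$. In fact you spell out the factorization $S^{\Theta}_{\Bbb J}-(S^{\Theta}_{\Bbb J})^2=S^{-\frac{1}{2}}_{CC'}\big(S_{\Bbb J}-S_{\Bbb J}S^{-1}_{CC'}S_{\Bbb J}\big)S^{-\frac{1}{2}}_{CC'}$ and the order-preservation of conjugation more carefully than the paper does, which leaves these steps implicit.
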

\begin{proof}
In the proof of Theorem \ref{t3}, we showed that
$$\sum_{i\in\Bbb J}(C^*\Theta^*_i \Theta_iC')f=S_{CC'}^{-\frac{1}{2}}S_{\Bbb J}S_{CC'}^{-\frac{1}{2}}f.$$
By Corollary \ref{cor2} we get
$$0\leq\sum_{i\in\Bbb J}(C^*\Theta^*_i \Theta_iC')f-\big(\sum_{i\in\Bbb J}(C^*\Theta^*_i \Theta_iC')f\big)^2\leq\frac{1}{4}Id_H.$$
So,
$$0\leq S^{-\frac{1}{2}}_{CC'}(S_{\Bbb J}-S_{\Bbb J}S^{-1}_{CC'}S_{\Bbb J})S^{-\frac{1}{2}}_{CC'}\leq\frac{1}{4}S_{CC'},$$
and it completes the proof.
\end{proof}
\begin{cor}
Let $\Lambda$ be a $CC'$-controlled g-frame with  $CC'$-controlled
g-frame operator $S_{CC'}$ . Suppose that $S^{-\frac{1}{2}}_{CC'}$
commutes  $C, C'$. Then for each  $f\in H$,
$$\sum_{i\in\Bbb J}\Vert S^{-\frac{1}{2}}_{CC'}C^*\Lambda_i^*\Lambda_iC'f\Vert^2+\Vert S^{-\frac{1}{2}}_{CC'}S_{\Bbb J^c}f\Vert^2\geq\frac{3}{4}\Vert S^{-\frac{1}{2}}_{CC'}\Vert^{-1}\Vert f\Vert^2.$$
\end{cor}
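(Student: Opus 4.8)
The plan is to run the argument of Theorem~\ref{t3} again, but with the \emph{Moreover} inequality of Corollary~\ref{cor1} in the role played there by the equality, and then to trade the factor $\Vert S_{CC'}^{\frac{1}{2}}f\Vert^2$ that appears for a multiple of $\Vert f\Vert^2$. Concretely, I would set $\Theta_i:=\Lambda_i S_{CC'}^{-\frac{1}{2}}$ as in that proof; by Remark~\ref{exam}, and because $S_{CC'}^{-\frac{1}{2}}$ commutes with $C$ and $C'$, the family $\Theta=\{\Theta_i\}_{i\in\Bbb I}$ is a Parseval $CC'$-controlled g-frame, and one has the factorisation $C^*\Theta_i^*\Theta_i C' = S_{CC'}^{-\frac{1}{2}}C^*\Lambda_i^*\Lambda_i C' S_{CC'}^{-\frac{1}{2}}$. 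Summing over $\Bbb J^c$ gives $\sum_{i\in\Bbb J^c}(C^*\Theta_i^*\Theta_i C')h = S_{CC'}^{-\frac{1}{2}}S_{\Bbb J^c}S_{CC'}^{-\frac{1}{2}}h$ for every $h\in H$.

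Since $\Theta$ is Parseval, the \emph{Moreover} inequality of Corollary~\ref{cor1} applies to it, yielding for all $h\in H$
$$\sum_{i\in\Bbb J}\Vert(C^*\Theta_i^*\Theta_i C')h\Vert^2 + \Big\Vert\sum_{i\in\Bbb J^c}(C^*\Theta_i^*\Theta_i C')h\Big\Vert^2 \ \geq\ \frac{3}{4}\,\Vert h\Vert^2 .$$
Substituting the factorisation above into both summands turns the left-hand side into $\sum_{i\in\Bbb J}\Vert S_{CC'}^{-\frac{1}{2}}C^*\Lambda_i^*\Lambda_i C' S_{CC'}^{-\frac{1}{2}}h\Vert^2 + \Vert S_{CC'}^{-\frac{1}{2}}S_{\Bbb J^c}S_{CC'}^{-\frac{1}{2}}h\Vert^2$.

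The decisive step, as at the end of the proof of Theorem~\ref{t3}, is to put $h=S_{CC'}^{\frac{1}{2}}f$; this is legitimate and exhausts all $f\in H$ because $S_{CC'}^{\frac{1}{2}}$ is a bijection of $H$. Then $S_{CC'}^{-\frac{1}{2}}h=f$, so the first summand becomes $\sum_{i\in\Bbb J}\Vert S_{CC'}^{-\frac{1}{2}}C^*\Lambda_i^*\Lambda_i C' f\Vert^2$ and the second becomes $\Vert S_{CC'}^{-\frac{1}{2}}S_{\Bbb J^c}f\Vert^2$, exactly the left-hand side of the claim, while the right-hand side becomes $\frac{3}{4}\Vert S_{CC'}^{\frac{1}{2}}f\Vert^2$.

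It remains to bound $\Vert S_{CC'}^{\frac{1}{2}}f\Vert^2$ from below by a multiple of $\Vert f\Vert^2$. Writing $\Vert f\Vert = \Vert S_{CC'}^{-\frac{1}{2}}(S_{CC'}^{\frac{1}{2}}f)\Vert \le \Vert S_{CC'}^{-\frac{1}{2}}\Vert\,\Vert S_{CC'}^{\frac{1}{2}}f\Vert$ and rearranging gives the required lower bound for $\Vert S_{CC'}^{\frac{1}{2}}f\Vert^2$ in terms of $\Vert S_{CC'}^{-\frac{1}{2}}\Vert$. I expect this last operator-norm estimate to be the only delicate point: the clean inequality above in fact produces the factor $\Vert S_{CC'}^{-\frac{1}{2}}\Vert^{-2}$, so matching the exponent stated in the corollary is exactly where I would scrutinise the bookkeeping; everything before it is a mechanical substitution into Remark~\ref{exam} and Corollary~\ref{cor1}.
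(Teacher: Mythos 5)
Your proof is correct and is essentially the paper's own argument: the paper likewise applies the \emph{Moreover} inequality of Corollary \ref{cor1} to the Parseval family $\Theta_i=\Lambda_iS_{CC'}^{-\frac{1}{2}}$ and evaluates at $S_{CC'}^{\frac{1}{2}}f$, citing Theorem \ref{t3} for the factorisation $C^*\Theta_i^*\Theta_iC'=S_{CC'}^{-\frac{1}{2}}C^*\Lambda_i^*\Lambda_iC'S_{CC'}^{-\frac{1}{2}}$ (the paper's middle display contains a typographical slip---a minus sign and $\Bbb J$ where a plus sign and $\Bbb J^{c}$ belong---which your version tacitly corrects). Your suspicion about the exponent is also vindicated: the paper's own proof terminates with $\frac{3}{4}\Vert S_{CC'}^{-1}\Vert^{-1}\Vert f\Vert^{2}=\frac{3}{4}\Vert S_{CC'}^{-\frac{1}{2}}\Vert^{-2}\Vert f\Vert^{2}$, exactly the bound you derive, so the factor $\Vert S_{CC'}^{-\frac{1}{2}}\Vert^{-1}$ in the statement is not what either argument produces (and since neither of $\Vert S_{CC'}^{-\frac{1}{2}}\Vert^{-1}$, $\Vert S_{CC'}^{-\frac{1}{2}}\Vert^{-2}$ dominates the other in general, the statement should be read with $\Vert S_{CC'}^{-1}\Vert^{-1}$ in its place).
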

\begin{proof}
Applying Theorem \ref{t3} and Corollary \ref{cor1}, we obtain
\begin{align*}
\sum_{i\in\Bbb J}\Vert S^{-\frac{1}{2}}_{CC'}C^*\Lambda_i^*&
\Lambda_iC'f\Vert^2+\Vert S^{-\frac{1}{2}}_{CC'}S_{\Bbb J^c}f\Vert^2=\\
&=\sum_{i\in\Bbb J}\Vert(C^*\Theta^*_i\Theta_iC')S^{\frac{1}{2}}_{CC'}
f\Vert^2-\Vert \sum_{i\in\Bbb J}(C^*\Theta^*_i\Theta_iC')S^{\frac{1}{2}}_{CC'}f\Vert^2\\
&\geq\frac{3}{4}\Vert S^{\frac{1}{2}}_{CC'}f\Vert^2\\
&=\frac{3}{4}\langle S_{CC'}f, f\rangle\\
&\geq\frac{3}{4}\Vert S^{-1}_{CC'}\Vert^{-1}\Vert f\Vert^2.
\end{align*}
\end{proof}
\begin{thm}
Let $\Lambda$ be a Parseval $CC'$-controlled g-frame for $H$. Then
\begin{enumerate}
\item [(i)] $0\leq S_{\Bbb J}-S_{\Bbb J}^2\leq\dfrac{1}{4}id_H$.
\item [(ii)] $\dfrac{1}{2}id_H\leq S_{\Bbb J}^2+S_{\Bbb J^c}^2\leq\dfrac{3}{2}id_H$.
\end{enumerate}
\end{thm}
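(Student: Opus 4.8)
The plan is to handle the two parts in order and to recognize that (ii) is an essentially algebraic corollary of (i). Throughout I would use that, since $\Lambda$ is Parseval, its canonical dual is $\Lambda$ itself, so the operator $S_{\Bbb J}$ collapses to $S_{\Bbb J}=\sum_{i\in\Bbb J}C^*\Lambda^*_i\Lambda_iC'$, a positive self-adjoint operator (each summand being positive by the standing assumption), and the partition identity $S_{\Bbb J}+S_{\Bbb J^c}=Id_H$ holds because the full sum over $\Bbb I$ is the Parseval frame operator $S_{CC'}=Id_H$.

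For part (i), I would observe that this inequality is exactly Corollary \ref{cor2} and hence needs no fresh argument: writing $S_{\Bbb J^c}=Id_H-S_{\Bbb J}$ makes $S_{\Bbb J}$ and $S_{\Bbb J^c}$ commute automatically, the identity $S_{\Bbb J}-S_{\Bbb J}^2=S_{\Bbb J}S_{\Bbb J^c}$ exhibits a product of two commuting positive operators and so yields the lower bound $0\leq S_{\Bbb J}-S_{\Bbb J}^2$, while Lemma \ref{l0}(ii) applied to the self-adjoint $S_{\Bbb J}$ with $a=-1$, $b=1$, $c=0$ delivers the upper bound $\frac{1}{4}id_H$.

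For part (ii), the first step is to eliminate $S_{\Bbb J^c}$. Substituting $S_{\Bbb J^c}=Id_H-S_{\Bbb J}$ and expanding $S_{\Bbb J^c}^2=Id_H-2S_{\Bbb J}+S_{\Bbb J}^2$ yields the key identity
$$S_{\Bbb J}^2+S_{\Bbb J^c}^2=Id_H-2\bigl(S_{\Bbb J}-S_{\Bbb J}^2\bigr),$$
after which the two-sided estimate of part (i) finishes everything: the inequality $0\leq S_{\Bbb J}-S_{\Bbb J}^2$ gives $S_{\Bbb J}^2+S_{\Bbb J^c}^2\leq Id_H\leq\frac{3}{2}id_H$, and the inequality $S_{\Bbb J}-S_{\Bbb J}^2\leq\frac{1}{4}id_H$ gives $S_{\Bbb J}^2+S_{\Bbb J^c}^2\geq Id_H-\frac{1}{2}id_H=\frac{1}{2}id_H$. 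One could equally note $S_{\Bbb J}^2\leq S_{\Bbb J}$ and $S_{\Bbb J^c}^2\leq S_{\Bbb J^c}$ from part (i) and add them to reach the upper bound.

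I do not anticipate a real obstacle here; once (i) is in hand the whole statement is elementary operator-order arithmetic, and the stated upper bound $\frac{3}{2}id_H$ is in fact not tight (the argument produces the sharper bound $Id_H$). The only point demanding care is the self-adjointness of $S_{\Bbb J}$ in the Parseval case, since this is what legitimizes both the substitution inside the squared terms and the reading of the displayed relations as operator inequalities; this in turn rests on the assumption that each $C^*\Lambda^*_i\Lambda_iC'$ is positive.
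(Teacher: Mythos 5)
Your proposal is correct and takes essentially the same route as the paper: part (i) is exactly Corollary \ref{cor2} (commutativity of $S_{\Bbb J}$ and $S_{\Bbb J^c}$ from the partition identity, positivity of the product of commuting positive operators for the lower bound, and Lemma \ref{l0} for the $\frac{1}{4}$ bound), and part (ii) rests on the same algebraic identity $S_{\Bbb J}^2+S_{\Bbb J^c}^2=id_H-2\bigl(S_{\Bbb J}-S_{\Bbb J}^2\bigr)$ that the paper derives, with both bounds then read off from (i). Your two side remarks are also sound: the $\frac{3}{2}id_H$ bound is indeed not sharp (the argument really gives $id_H$), and the whole statement does hinge on $S_{\Bbb J}$ being positive and self-adjoint in the Parseval case, which follows from the standing positivity assumption on each $C^*\Lambda_i^*\Lambda_iC'$.
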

\begin{proof}
(i). Since $S_{\Bbb J}+S_{\Bbb J^c}=id_H$, then $S_{\Bbb J}S_{\Bbb
J^c}+S^2_{\Bbb J^c}=S_{\Bbb J^c}$. Thus
$$S_{\Bbb J}S_{\Bbb J^c}=S_{\Bbb J^c}-S^2_{\Bbb J^c}=S_{\Bbb J^c}(id_H-S_{\Bbb J^c})=S_{\Bbb J^c}S_{\Bbb J}.$$
But, $\Lambda$ is a Parseval $CC'$-controlled g-frame, so $0\leq
S_{\Bbb J}S_{\Bbb J^c}=S_{\Bbb J}-S_{\Bbb J}^2$. On the other hand,
by Lemma \ref{l2}, we get
$$S_{\Bbb J}-S_{\Bbb J}^2\leq\frac{1}{4}id_H.$$
(ii). Since $S_{\Bbb J}S_{\Bbb J^c}=S_{\Bbb J^c}S_{\Bbb J}$, so by
Lemma \ref{l2}
$$S_{\Bbb J}^2+S_{\Bbb J^c}^2=id_H-2S_{\Bbb J}S_{\Bbb J^c}=2S_{\Bbb J}^2-2S_{\Bbb J}+id_H\geq\frac{1}{2}id_H.$$
and we get the right inequality  by Lemma \ref{l2} and $0\leq
S_{\Bbb J}-S_{\Bbb J}^2$,
$$S_{\Bbb J}^2+S_{\Bbb J^c}^2\leq id_H+2S_{\Bbb J}-2S_{\Bbb J}^2\leq\frac{3}{2}id_H.$$
\end{proof}

 ----------------------------------------------------------------


\begin{thebibliography}{99}
\setlength{\baselineskip}{.45cm}

\bibitem{cas1} R. Balan, P. Casazza, D. Edidin and G. Kutyniok,
{\em A Fundamental Identity for Parseval Frames}, Proc. Amer. Math.
Soc. \textbf{135}, 1007-1015, (2007).

\bibitem{Bal} P. Balazs, J. P. Antoine and A. Grybo´s,
{\em Weighted and controlled frames: mutual relationship and first
numerical properties}, International Journal of Wavelets,
Multiresolution and Information Processing 8., \textbf{14}, No. 1,
109-132, (2010).

\bibitem{bog}I. Bogdanova, P. Vandergheynst, J .P . Antoine, L. Jacques and M. Morvidone,
{\em Stereographic wavelet frames on the sphere}, Applied Comput.
Harmon. Anal. \textbf{19}, 223-252, (2005).

\bibitem {ch} O. Christensen,
{\em  An Introduction to Frames and Riesz Bases}, Birkh\"auser,
Boston, 2003.


\bibitem{Duffin}  R. J. Duffin and A. C. Schaeffer, {\em A class of nonharmonic Fourier series}, Trans. Am,
Math. Soc. \textbf{72}, 341-366, (1952).


\bibitem{Hua}  D. Hua and Y. Huang, {\em  Controlled K-g-frames in Hilbert spaces}, Results. Math.
DOI 10.1007/s00025-016-0613-0, (2016).

\bibitem{Kho}  A. Khosravi and K. Musazadeh,
{\em Controlled fusion frames}, Methods Funct. Anal. Topology
\textbf{18}(3), 256-265, (2012).

\bibitem{Don} D. Li and J. Leng,
{\em Generalized frames and controlled operators in Hilbert spaces},
atXiv:1709.0058v1 [math.FA] 2 Sep 2017.

\bibitem{Mus} K. Musazadeh and H. Khandani,
{\em Some results on controlled frames in Hilbert spaces}, Acta Math. Sci. \textbf{36}(3), 655-665, (2016).

\bibitem {naf} A. Najati,  M. H. Faroughi and A. Rahimi,
{\em G-frames and stability of g-frames in Hilbert spaces}, Methods
Funct. Anal. Topology  \textbf{14}(3),  271-286, (2008).

\bibitem{naj} A. Najati,
{\em Submitted for the degree of Ph.D}, University of Tabriz, Iran, 2006.

\bibitem{Najaf} M. Nouri, A. Rahimi and S. Najafzadeh,
{\em Controlled $K$-frames in Hilbert spaces}, J. Ram. Soc. Math. \textbf{4}(2), 39-50 (2015).

\bibitem{osg} E. Osgooei and A. Rahimi, {\em Gram matrix associated to controlled frames},
International Journal of Wavelets, Multiresolution and Information
Processing  \textbf{16}(5), 1850035, (2018).

\bibitem{Rah} A. Rahimi and A. Fereydooni, {\em Controlled G-frames and their G-multipliers in Hilbert spaces},
An. St. Univ. Ovidius Constanta \textbf{21}(2), 223-236, (2013).

\bibitem {sun}  W. Sun,
{\em G-Frames and G-Riesz bases}, J. Math. Anal. Appl. \textbf{326},
437-452, (2006).
\end{thebibliography}
\end{document}